\documentclass[12pt]{article}
\usepackage{amsmath, amssymb,amsthm}%,amsbsy
\usepackage{graphicx}
\usepackage{amsthm}
\usepackage{a4wide}
\usepackage{amssymb}
\usepackage{pdfsync}
\usepackage{epstopdf}
\textheight 8.3in
\topmargin -0.4in

\usepackage{enumerate}

\newtheorem{theorem}{Theorem}
\newtheorem{lemma}{Lemma}

\newtheorem{remark}{Remark}
\newtheorem{prop}{Proposition}

\newtheorem{fact}{Fact}
\newtheorem{conj}{Conjecture}

\newtheorem{cor}{Corollary}
%%\newcommand*{\proof}{\noindent {\bf Proof}.\,\,}

% double spacing --
% \def \baselinestretch {1.5}

\addtolength{\topmargin}{-3\baselineskip}
\addtolength{\textheight}{6\baselineskip}
\addtolength{\textwidth}{2cm}
\addtolength{\oddsidemargin}{-1cm}
\addtolength{\evensidemargin}{-1cm}
%\includepackage{tikz}

\def\le{\leqslant}
\def\ge{\geqslant}

\begin{document}

\title{Tur\'an numbers for  3-uniform linear paths of length 3}

\author{Eliza Jackowska
\\A. Mickiewicz University\\Pozna\'n, Poland\\{\tt elijac@amu.edu.pl}
\and Joanna Polcyn
\\A. Mickiewicz University\\Pozna\'n, Poland\\{\tt joaska@amu.edu.pl}
 \and Andrzej Ruci\'nski
\thanks{Research supported by the Polish NSC grant xxxxxxx.
}
\\A. Mickiewicz University\\Pozna\'n, Poland\\{\tt rucinski@amu.edu.pl}
}

\date{\today}

\maketitle

\begin{abstract}
In this paper we confirm a conjecture of F\"uredi, Jiang, and Seiver, and determine an exact
formula for the Tur\'an number $ex_3(n; P_3^3)$ of the 3-uniform linear path $P^3_3$ of length 3,
valid for all $n$. It coincides with the analogous formula for the 3-uniform
triangle $C^3_3$, obtained earlier by Frankl and F\"uredi for $n\ge 75$ and Cs\'ak\'any  and Kahn for
all $n$. In view of this coincidence, we also determine a `conditional' Tur\'an number, defined as
the maximum number of edges in a $P^3_3$-free 3-uniform hypergraph on $n$ vertices which is \emph{not} $C^3_3$-free.
\end{abstract}

%--------------------------------------------------------------------------------------------------------------------------------------------------
\section{Introduction}\label{intro}
A \emph{$k$-uniform hypergraph} (or  \emph{$k$-graph}, for short) is an ordered pair $H=(V,E)$, where $V$ is a finite set and $E\subseteq \binom Vk$ is a family of $k$-element subsets of $V$. We often identify $H$ with $E$, for instance, writing $|H|$ for the number of edges in $H$. Given a positive integer $n$ and a family of $k$-graphs $\mathcal F$, we say that a $k$-graph $H$ is \emph{$\mathcal F$-free} if $H$ contains no member of $\mathcal F$ as a subhypergraph. \emph{The
Tur\'an number} $ex_k(n; \mathcal F)$ is defined as the maximum number of edges in an $\mathcal F$-free $k$-graph on
$n$ vertices. We set $ex_3(0;\mathcal F)=0$ for convenience.

An $n$-vertex $k$-graph $H$ is called \emph{extremal} with respect to $\mathcal F$ if $H$ is $\mathcal F$-free and $|H|=ex_k(n;\mathcal F)$. We denote by $Ex_k(n;\mathcal F)$ the set of all, pairwise non-isomorphic $n$-vertex $k$-graphs which are extremal with respect to $\mathcal F$. If $\mathcal F=\{F\}$, then we write \emph{$F$-free} instead of \emph{$\{F\}$-free} and write $ex_k(n;  F)$, and  $Ex_k(n;  F)$ instead of $ex_k(n;\{F\})$ and $Ex_k(n;\{F\})$.

 A \emph{linear path}
$P^k_\ell$ (a.k.a. \emph{loose path}, though some authors mean by this term something else) is a $k$-graph with $\ell$ edges $e_1,\dots, e_{\ell}$ such that
$|e_i\cap e_j|=0$ if $|i-j|>1$ and $|e_i\cap e_j|=1$ if $|i-j|=1$ (see Fig.\ref{Fig1} for $P^3_3$).  F\"uredi, Jiang, and Seiver \cite{fjs} have determined $ex_k(n; P^k_\ell)$ for all $k\ge 4$,
$\ell\ge 1$, and sufficiently large $n$. In particular, their result for $\ell=3$ states that
$ex_k(n; P^k_3)=\binom{n-1}{k-1}$. They conjectured that this formula remains valid  in the case
$k=3$ too.
It is interesting to note that the case $k=3$, $\ell\ge 4$, has also been solved, but again for
large $n$ only (see~\cite{kmv}). So,  the sole remaining instance is  $k=\ell=3$ which we settle here for all $n$, confirming the above mentioned conjecture from
\cite{fjs}.

\bigskip
\begin{figure}[!ht]
\centering
\includegraphics [width=5cm]{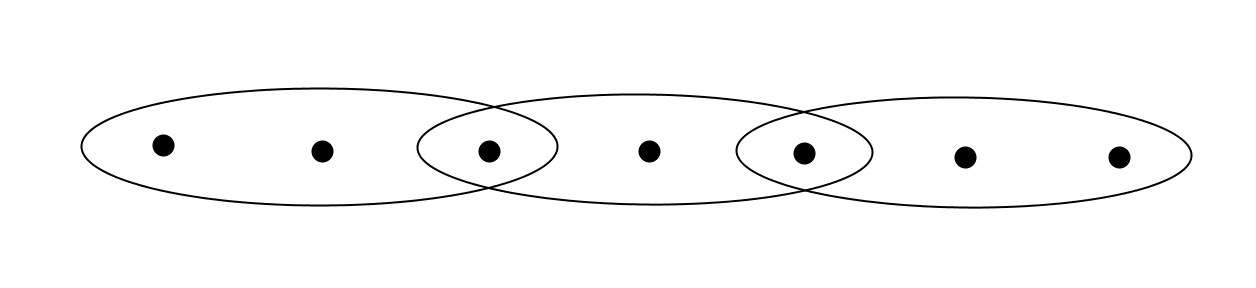}
\caption{The linear path  $P^3_3$}
\label{Fig1}
\end{figure}

 Let $K_n^k$ stand for \emph{the complete $k$-graph} with $n$ vertices, that is, one with $\binom nk$ edges. Note that when $n<k$ this is just a set of $n$ isolated vertices. \emph{A star}  is a hypergraph containing a vertex which belongs to all of its edges. An $n$-vertex $k$-uniform star with $\binom{n-1}{k-1}$ edges is called \emph{full} and denoted by $S^k_n$. By $F\cup H$  we denote the union of vertex disjoint copies of  $k$-graphs $F$ and $H$.

In this paper we prove two theorems. Our main theorem, Theorem \ref{main}, determines the Tur\'{a}n numbers for 3-uniform linear paths of length 3, for all $n$. Moreover, to each $n$ we match a unique extremal 3-graph.

\begin{theorem}\label{main}

 $$ex_3(n;P^3_3)=\left\{ \begin{array}{ll}
 \binom n3 & \textrm{ and $\quad Ex_3(n;P^3_3)=\{K^3_n\}\qquad\quad$\;\; for $n\le6,$ }\\
20 & \textrm{ and $\quad Ex_3(n;P^3_3)=\{K^3_6\cup K^3_1\}\quad$ for $n=7,$ }\\
\binom{n-1}{2} & \textrm{ and $\quad Ex_3(n;P^3_3)=\{S^3_n\}\qquad\quad$\;\; for $n\ge 8$.}
\end{array} \right.
$$

\end{theorem}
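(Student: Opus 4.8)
The plan is to establish the upper bound, the three constructions being immediate: $K^3_n$ and $K^3_6\cup K^3_1$ contain no $P^3_3$ simply because $P^3_3$ has $7$ vertices, while $S^3_n$ is intersecting (all edges share the centre) and hence cannot contain the disjoint pair of end-edges of $P^3_3$; their sizes are $\binom n3$, $20$, and $\binom{n-1}{2}$. For $n\le 6$ there is nothing to do, since every $3$-graph on at most $6$ vertices is $P^3_3$-free and $K^3_n$ is the unique one with $\binom n3$ edges. The engine for the rest is the elementary observation that in a $P^3_3$-free $H$, if $A,C$ are two disjoint edges then no edge $B$ satisfies $|B\cap A|=|B\cap C|=1$: such a $B$ has its third vertex outside $A\cup C$, so $A,B,C$ would be a copy of $P^3_3$. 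I would use this in the form: once a disjoint pair is fixed, every other edge meets at least one of $A,C$ in $0$ or in $\ge 2$ vertices.

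My main dichotomy is whether $H$ contains the loose triangle $C^3_3$. If $H$ is $C^3_3$-free, then the Csák\'any--Kahn theorem (valid for all $n$) gives $|H|\le ex_3(n;C^3_3)=\binom{n-1}{2}$ with $S^3_n$ the unique extremal graph for $n\ge 6$; this settles $n\ge 8$ outright and shows the $C^3_3$-free graphs contribute only $15<20$ when $n=7$. All the difficulty is therefore concentrated in the complementary case, where $H$ contains a copy of $C^3_3$ on a $6$-set $T=\{t_1,\dots,t_6\}$; this is exactly the quantity measured by the ``conditional'' Tur\'an number announced in the abstract, so the hard case of Theorem~\ref{main} and the second theorem of the paper are really the same computation.

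In that case I would fix the triangle $e_1e_2e_3$ on $T$ and classify each edge $f$ by $|f\cap T|$ and its position relative to the corners $t_1,t_3,t_5$ and midpoints $t_2,t_4,t_6$ of the triangle. If $H$ is intersecting we are again done by the Erd\H os--Ko--Rado bound $\binom{n-1}{2}$, so I may assume a disjoint pair exists and feed it to the engine. The aim is to show that the triangle forces essentially all edges inside $T$ (at most $\binom 63=20$), the only escape being whole edges sitting in a part of $V\setminus T$ that is disjoint from everything, which can never compensate. Concretely, an edge meeting $T$ in exactly one vertex $t_i$ and reaching two new vertices is disjoint from any triangle edge $e_j\not\ni t_i$, creating a fresh disjoint pair to which the engine applies; and an edge $\{w,t_i,t_j\}$ meeting $T$ in two vertices can be combined with two triples of $T$ (e.g.\ $\{w,t_i,t_j\},\{t_i,\cdot,\cdot\},\{\cdot,\cdot,\cdot\}$) to build a $P^3_3$. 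Summing the surviving cases gives $|H|\le\max\{\binom{n-1}{2},20\}$, with the value $20$ attained for $n=7$ only by completing $T$ to $K^3_6$ and isolating the seventh vertex, i.e.\ by $K^3_6\cup K^3_1$; assembling the two cases then yields the stated formula and the uniqueness of the extremal graph in every range.

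The main obstacle is precisely this last case. The clean statement ``every edge lies in $T$'' is false in general (witness $K^3_6$ together with a disjoint edge on three fresh vertices, which is $P^3_3$-free and contains $C^3_3$), so the confinement argument must permit separate ``blobs'' and still beat $\binom{n-1}{2}$. The genuinely delicate sub-point is that an edge leaving $T$ in two vertices must be shown to force a path even when $H\cap\binom T3$ is not the full $K^3_6$ but merely contains a triangle; here the number of triples of $T$ actually present interacts with the position of $f\cap T$ relative to the triangle, and a careful, somewhat tedious case analysis over these positions seems unavoidable. I expect the bookkeeping that simultaneously yields the exact bound $20$ at $n=7$ and strict inequality below $\binom{n-1}{2}$ for $n\ge 8$ (needed for uniqueness of $S^3_n$) to be the most laborious part of the proof.
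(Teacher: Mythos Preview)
Your high-level plan coincides with the paper's: reduce to the case $C^3_3\subset H$ via the Cs\'ak\'any--Kahn theorem, then analyse edges according to their intersection with the six-vertex set $U=V(C^3_3)$. (One minor slip: the uniqueness statement $Ex_3(n;C^3_3)=\{S^3_n\}$ is only asserted for $n\ge 8$; this is harmless since at $n=7$ you only need the inequality $15<20$.)

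The genuine gap is in your handling of edges meeting $U$ in two vertices. Your claim that any $\{w,t_i,t_j\}$ can be combined with two triangle edges to build a $P^3_3$ is false. Write the triangle edges as $\{x_1,x_2,y_3\},\{x_2,x_3,y_1\},\{x_1,x_3,y_2\}$; then the edge $\{x_1,y_1,w\}$ meets every triangle edge, so no $P^3_3$ arises from the triangle alone. The same holds for the six pairs $\{x_i,y_i\}$ and $\{x_i,x_j\}$: these give $6s$ potential cross-edges that survive your elimination. Your ``engine'' does rule out the remaining nine pairs (this is the paper's Fact~\ref{euw}), but it cannot by itself finish the argument, and the Erd\H os--Ko--Rado detour is not needed.

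What the paper does with the surviving $6s$ triples is threefold, and none of it is in your outline. First, a matching argument (your engine applied to disjoint pairs \emph{among} these cross-edges) caps their number at $3s$. Second, the presence of even one cross-edge forbids at least seven triples inside $\binom U3$, giving $|H[U]|+|H(U,W)|\le 13+3s$ (Lemma~\ref{eueuw1}). Third --- and this replaces your vague ``blobs'' remark --- the subhypergraph $H[W]$ is itself $P^3_3$-free, so $|H[W]|\le ex_3(n-6;P^3_3)$, and an induction on $n$ closes the estimate $14+3s+ex_3(s;P^3_3)<\binom{s+5}{2}$. Your proposal anticipates a tedious case analysis of the interaction between cross-edges and $H[U]$, which is correct, but it misses the counting step $|H(U,W)|\le 3s$ and the inductive recursion on $W$; without those two ingredients the bookkeeping you describe cannot reach the bound $\binom{n-1}{2}$ for general $n$.
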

\bigskip

The proof of Theorem \ref{main} relies on a similar result for 3-uniform linear cycles, or triangles.
Let $C^3_3$ be \emph{the triangle} defined as a 3-graph on 6 vertices $a,b,c,d,e,f$
and with 3 edges $\{a,b,c\}$, $\{c,d,e\}$, and $\{e,f,a\}$.
It was proved  in \cite{ff} that $ex_3(n;C^3_3)=\binom{n-1}2$ for all
$n\ge 75$. This has been later extended  by Cs\'ak\'any
and Kahn \cite{ck} to cover all $n$.

\begin{theorem}[\cite{ff,ck}]\label{cycle} For $n\ge6$, $ex_3(n;C^3_3)=\binom{n-1}2$. Moreover, for $n\ge8$,
$Ex_3(n;C^3_3)=\{S^3_n\}$.
\end{theorem}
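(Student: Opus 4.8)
The plan is to prove the easy lower bound and then the matching upper bound $|H|\le\binom{n-1}{2}$ by induction on $n$, driven by a single structural lemma about the link of a maximum-degree vertex. For the lower bound, note that the three pairwise intersections of a triangle $C^3_3$ are three \emph{distinct} vertices, so no vertex lies in all three of its edges; since every two edges of a star share its centre, the full star $S^3_n$ is $C^3_3$-free and gives $ex_3(n;C^3_3)\ge\binom{n-1}{2}$. For the upper bound, let $H$ be $C^3_3$-free on $n$ vertices, let $v$ be a vertex of maximum degree, write $L=L_v$ for its link graph on $V\setminus\{v\}$, and let $D$ be the set of edges avoiding $v$, so that $|H|=|L|+|D|$.

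The engine is the following lemma. Suppose $\{x,y,z\}\in D$, and suppose that for one of its pairs, say $\{x,y\}$, one can pick \emph{distinct} vertices $b\in N_L(x)$ and $f\in N_L(y)$ with $b,f\notin\{x,y,z\}$. Then the three edges $\{v,x,b\}$, $\{x,y,z\}$, $\{y,f,v\}$ form a copy of $C^3_3$ on the six distinct vertices $v,x,y,z,b,f$, their pairwise intersections being exactly $\{x\}$, $\{y\}$, $\{v\}$. Since $H$ is $C^3_3$-free, this selection must be impossible for every pair of every edge of $D$. Qualitatively, this forces the $L$-neighbourhoods around each $D$-edge to be extremely sparse: at most one vertex of a given $D$-edge can carry link-edges leaving that edge (the only exception being a degenerate case in which two of the three vertices share one common external $L$-neighbour). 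Thus an edge of $D$ essentially pins down the link structure on its vertices.

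I would then combine this with the inductive hypothesis applied to $D$. Let $U$ be the set of vertices of $V\setminus\{v\}$ meeting $D$, with $|U|=a$ and $|V\setminus(\{v\}\cup U)|=b$, so $a+b=n-1$. Because $D$ is itself a $C^3_3$-free $3$-graph on the $a$ vertices of $U$, induction gives $|D|\le\binom{a-1}{2}$, while the lemma keeps the contribution of $L$ on $U$ small; a direct computation then shows that $|L|+|D|\le\binom{n-1}{2}$ in every regime where a few vertices avoid $D$ (so that $b$ is bounded below). The base cases $n=6,7$ are verified directly.

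The main obstacle is the opposite regime, in which almost all vertices meet $D$ (so $b$ is tiny): here $L$ and $D$ cannot be bounded separately without loss. The resolution must exploit the maximality of $\deg(v)$: if $D$ is large then, by the \emph{uniqueness} half of the inductive hypothesis, $D$ is close to a star inside $U$ and so contains a vertex $u$ of large degree, whence $\deg_H(u)$ is large; but then maximality forces $|L|=\deg(v)$ to be large as well, contradicting the lemma's verdict that the many vertices meeting $D$ have essentially no external link-edges. Turning this tension into a quantitative inequality — showing that a large $D$ and a maximal yet necessarily sparse link cannot coexist — is the heart of the proof and the step I expect to be hardest. Finally, the uniqueness statement for $n\ge8$ follows by tracing equality through the induction, forcing every link-degree and counting inequality to be tight and collapsing $H$ onto a single dominant vertex; the failure of uniqueness for $n\le7$ reflects the small competing configurations that survive only at those sizes.
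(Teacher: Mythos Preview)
The paper does not prove this theorem: it is quoted from Frankl--F\"uredi~\cite{ff} (for $n\ge75$) and Cs\'ak\'any--Kahn~\cite{ck} (for all $n$), and then invoked as a black box in the proof of Theorem~\ref{main}. There is therefore no ``paper's own proof'' to compare your proposal against.

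On its own merits, your proposal is a plan rather than a proof. The structural lemma you isolate is correct: an edge $\{x,y,z\}\in D$ together with distinct external link-neighbours $b\in N_L(x)$ and $f\in N_L(y)$ outside $\{x,y,z\}$ does produce a copy of $C^3_3$ on $\{v,x,y,z,b,f\}$. But the decisive regime --- $D$ large, with almost every vertex covered by an edge avoiding $v$ --- is explicitly left open (``the step I expect to be hardest''), and the appeal to the uniqueness clause of the inductive hypothesis does not close it: uniqueness describes $D$ only when $|D|=\binom{a-1}{2}$ exactly, not when $|D|$ is merely large, so you would need a stability version of the statement you are trying to prove. For context, neither cited source proceeds by an inductive link-graph analysis; in particular the Cs\'ak\'any--Kahn argument valid for all $n\ge6$ is algebraic (homological), and pushing a purely combinatorial induction down to the small cases where non-star extremal configurations coexist is known to be delicate.
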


Theorem \ref{cycle} is the starting point of our proof of Theorem \ref{main}. Indeed, we
show that having a triangle in a 3-graph with at least $\binom{n-1}2$ edges leads to the existence of a copy of $P^3_3$. In
fact, it has turned out that the presence of $C^3_3$ pushes down the number of edges a $k$-graph may have without
containing a copy of $P_3^3$.  Motivated by this phenomenon, we also determine the largest number of edges in an $n$-vertex $P^3_3$-free 3-graph, $n\ge6$, which contains
a triangle.  We denote this `conditional' Tur\'an number by $ex_3(n;P^3_3|C^3_3)$ and the corresponding extremal family by $Ex_3(n;P^3_3|C^3_3)$. Our second
result expresses  $ex_3(n;P^3_3|C^3_3)$ in terms of the ordinary Tur\'an numbers $ex_3(n;P^3_3)$.

\begin{theorem}\label{second}
For $n\ge6$,
$$ex_3(n;P^3_3|C^3_3)=20+ex_3(n-6;P^3_3).$$
The only element of $Ex_3(n;P^3_3|C^3_3)$ is the disjoint union of $K_6^3$ and the unique extremal $P^3_3$-free 3-graph on
$n-6$ vertices.
\end{theorem}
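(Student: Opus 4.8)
The plan is to prove matching lower and upper bounds, with essentially all the work in the upper bound.

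For the lower bound I would exhibit the claimed hypergraph: the vertex-disjoint union of $K_6^3$ with a hypergraph $G$ on the remaining $n-6$ vertices that is extremal for $ex_3(n-6;P^3_3)$ (such a $G$ exists and, by Theorem \ref{main}, is unique). This union contains a triangle inside its $K_6^3$ part, and it is $P^3_3$-free: since $P^3_3$ is connected and spans $7>6$ vertices, any copy would lie inside a single component, but $K_6^3$ has only $6$ vertices and $G$ is $P^3_3$-free. Hence $ex_3(n;P^3_3|C^3_3)\ge 20+ex_3(n-6;P^3_3)$.

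For the upper bound, fix a $P^3_3$-free $H$ on $n$ vertices containing a triangle, and fix one triangle with vertex set $T=\{a,b,c,d,e,f\}$ and edges $\{a,b,c\},\{c,d,e\},\{e,f,a\}$; put $U=V\setminus T$. Split the edges of $H$ into those inside $T$, those inside $U$, and the crossing edges, the latter classified by $|g\cap T|\in\{1,2\}$. First I would dispose of the thin crossing edges. If $|g\cap T|=1$, its vertex of $T$ lies in some triangle edge $t_i$, while a triangle edge $t_j$ avoiding that vertex meets $t_i$ in one vertex and is disjoint from $g$, so $g,t_i,t_j$ form a $P^3_3$; thus no such $g$ exists. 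If $|g\cap T|=2$, a short check shows the same three-edge construction produces a $P^3_3$ \emph{unless} the pair $g\cap T$ meets all three triangle edges; there are exactly six such transversal pairs, so every surviving crossing edge sits on one of them.

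The heart of the matter is these transversal crossing edges. The key local fact is that if $w\in U$ lies in a crossing edge, then $w$ lies in no edge of $H[U]$: otherwise a crossing edge $\{x,y,w\}$, a triangle edge meeting $\{x,y\}$ in exactly one vertex, and a $U$-edge through $w$ line up into a $P^3_3$. Writing $W_0\subseteq U$ for the set of outer endpoints of crossing edges and $W_1=U\setminus W_0$, it follows that $H[U]$ lives on $W_1$, so $|H[U]|\le ex_3(n-6-|W_0|;P^3_3)$; moreover every edge meeting $T$ lies inside $T\cup W_0$, so those edges form a $P^3_3$-free hypergraph on $6+|W_0|$ vertices. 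Combining gives $|H|\le ex_3(6+|W_0|;P^3_3)+ex_3(n-6-|W_0|;P^3_3)$.

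I expect the main obstacle to be that this clean split is \emph{not} by itself sharp: the generic inequality $ex_3(6+m;P^3_3)+ex_3(n-6-m;P^3_3)\le 20+ex_3(n-6;P^3_3)$ can hold with equality for some $m\ge 1$ (e.g. $m=2$, $n=14$ gives $21+20=41=20+21$), so it establishes neither the exact value in all cases nor uniqueness. To close the gap I would exploit the extra structure a transversal crossing edge forces: it is incompatible with many edges inside $T$ — for instance $\{a,c,w\}$, $\{c,d,e\}$, $\{b,d,f\}$ form a $P^3_3$, and more generally $H[T]=K_6^3$ cannot coexist with any crossing edge — so each crossing edge must be paid for by the absence of several internal edges of $T$. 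Quantifying this trade-off, bounding the edges meeting $T$ by $20$ minus a term that dominates the number of crossing edges (rather than by the loose $ex_3(6+|W_0|;P^3_3)$), is the crux; it should yield $|H|\le 20+ex_3(n-6;P^3_3)$ with equality only when $W_0=\emptyset$, i.e. when there are no crossing edges. In that case $H$ splits as $H[T]\cup H[U]$, forcing $H[T]=K_6^3$ and $H[U]$ extremal for $ex_3(n-6;P^3_3)$, whose uniqueness is supplied by Theorem \ref{main}; this gives exactly the stated unique extremal hypergraph.
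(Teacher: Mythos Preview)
Your framework coincides with the paper's: lower bound by the disjoint union, upper bound by fixing a triangle on $T$, observing that crossing edges must have exactly two vertices in $T$ forming one of the six transversal pairs (the paper's sets $T_1,T_2$), and splitting the outside into $W_0$ (touched by crossing edges) and its complement, so that $H$ restricted to the outside lives only on the complement. The paper then reduces, exactly as you do, to bounding the edges meeting $T$ strictly below $20+ex_3(|W_0|;P^3_3)$ and finishes via subadditivity of $ex_3(\cdot;P^3_3)$.

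The gap is in the crux, and the shape you propose for it is wrong. ``Edges meeting $T$ at most $20$ minus a term that dominates the number of crossing edges'' is unattainable: the crossing edges alone can number up to $3|W_0|$ (achieved, e.g., by taking all $T_2$-type edges), so no constant cap exists. The correct form, proved in the paper as Lemmas~\ref{eueuw1} and~\ref{113s}, is
\[
|H[T]|+|H(T,W_0)|\ \le\ c+3|W_0|,\qquad c=13\text{ for }1\le|W_0|\le2,\quad c=10\text{ for }|W_0|\ge3,
\]
obtained by two separate ingredients: (i) a matching argument between $T_1$ and $T_2$ capping the crossing edges at $3|W_0|$ (Proposition~\ref{3s}), and (ii) a case analysis showing that the \emph{pattern} of crossing edges forces $|H[T]|$ down from $20$ to $15$, $13$, $12$, or $10$, with the stronger reductions only guaranteed when $|H(T,W_0)|$ exceeds $|W_0|$ or $2|W_0|$. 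Step (ii) is where the real work lies: one must distinguish crossing edges of type $T_1$ (pair $\{x_i,y_i\}$) from type $T_2$ (pair $\{x_i,x_j\}$), and for $|W_0|\ge3$ make a further split on how the $T_1$-edges are distributed over $W_0$. Your proposal does not anticipate this structure. Once $c+3|W_0|<20+ex_3(|W_0|;P^3_3)$ is checked for each $|W_0|\ge1$, the strict inequality forces $W_0=\emptyset$ at any extremum, and your concluding uniqueness argument is correct.
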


Theorem \ref{second}, combined with Theorem \ref{main}, yields also the exact value of $ex_3(n;P^3_3|C^3_3)$. For brevity, we state it for $n\ge14$ only.

\begin{cor}
For $n\ge14$,
$$ex_3(n;P^3_3|C^3_3)=20+\binom{n-7}2\quad\mbox{ and }\quad Ex_3(n;P^3_3|C^3_3)=\{K_6^3\cup S_{n-6}^3\}.$$
\end{cor}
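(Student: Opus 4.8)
The plan is to obtain this corollary by directly substituting the relevant value from Theorem \ref{main} into the formula provided by Theorem \ref{second}. Since Theorem \ref{second} reduces the conditional Tur\'an number $ex_3(n;P^3_3|C^3_3)$ to an ordinary Tur\'an number on six fewer vertices, everything hinges on evaluating $ex_3(n-6;P^3_3)$ and on identifying the corresponding extremal $P^3_3$-free 3-graph.

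First I would note that the hypothesis $n\ge14$ is exactly what guarantees $n-6\ge8$, placing us in the third regime of Theorem \ref{main}. In that regime the theorem gives
$$ex_3(n-6;P^3_3)=\binom{(n-6)-1}{2}=\binom{n-7}{2},$$
and, crucially, it asserts that the extremal 3-graph is \emph{unique}, namely the full star $S^3_{n-6}$. Feeding this into the identity $ex_3(n;P^3_3|C^3_3)=20+ex_3(n-6;P^3_3)$ of Theorem \ref{second} yields the claimed value $20+\binom{n-7}{2}$.

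For the extremal family, Theorem \ref{second} states that the unique member of $Ex_3(n;P^3_3|C^3_3)$ is the disjoint union of $K^3_6$ with the unique extremal $P^3_3$-free 3-graph on $n-6$ vertices. Since the latter is $S^3_{n-6}$ by the uniqueness clause of Theorem \ref{main}, this disjoint union is precisely $K^3_6\cup S^3_{n-6}$, as required.

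In truth I do not expect any genuine obstacle here: the corollary is a mechanical consequence of the two theorems, and the only point demanding care is matching the threshold $n\ge14$ to the case distinction in Theorem \ref{main}. The restriction to $n\ge14$ is made precisely to sidestep the small-$n$ exceptions (those with $n-6\le7$) of that theorem; smaller values of $n$ would force one to invoke instead the first two regimes, whose extremal 3-graphs are $K^3_{n-6}$ or $K^3_6\cup K^3_1$, which is exactly why the clean closed form is stated only for $n\ge14$.
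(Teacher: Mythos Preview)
Your argument is correct and is exactly the approach the paper intends: the corollary is stated as an immediate consequence of combining Theorem~\ref{second} with the $n\ge8$ case of Theorem~\ref{main}, and the paper does not even spell out a separate proof. Your observation that the threshold $n\ge14$ is precisely what forces $n-6\ge8$ (hence the star regime of Theorem~\ref{main}) is the only point worth making, and you have made it.
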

Our last result follows rather from the proof of Theorem \ref{second} than from the theorem itself. Let $ex_3^{con}(n;P^3_3|C^3_3)$ be defined as $ex_3(n;P^3_3|C^3_3)$, but  for \emph{connected} graphs only.

\begin{cor}\label{connected}
For $n\ge9$, $$ex^{con}_3(n;P^3_3|C^3_3)=3n-8.$$
\end{cor}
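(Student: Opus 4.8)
The plan is to prove the lower bound $ex^{con}_3(n;P^3_3|C^3_3)\ge 3n-8$ by a single connected construction, and the matching upper bound by extracting from the proof of Theorem~\ref{second} the structural constraints that a triangle together with connectivity imposes, and then counting. For the construction, fix three vertices $x,y,z$ and $n-3$ further vertices $v_1,\dots,v_{n-3}$, and let $H$ consist of all triples meeting $\{x,y,z\}$ in at least two vertices: the edge $\{x,y,z\}$ together with the $3(n-3)$ edges $\{x,y,v_i\}$, $\{y,z,v_i\}$, $\{z,x,v_i\}$. Thus $|H|=1+3(n-3)=3n-8$, and $H$ is connected since every vertex lies in an edge through $\{x,y,z\}$. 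It contains the triangle $\{x,y,v_1\},\{y,z,v_2\},\{z,x,v_3\}$, whose six vertices are distinct as soon as $n\ge6$. Finally $H$ is $P^3_3$-free: the two end edges of any copy of $P^3_3$ are disjoint, whereas any two edges of $H$ intersect, because their traces on $\{x,y,z\}$ are $2$- or $3$-element subsets of a $3$-element set and two such subsets always meet.

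For the upper bound, let $H$ be connected and $P^3_3$-free, contain a triangle $T$, and have $n\ge9$ vertices. The decisive observation, which I would record while proving Theorem~\ref{second}, is that connectivity forbids any dense local cluster. Concretely, $H$ cannot contain a copy of $K^3_6$: if $K^3_6$ occupied a $6$-set $D$, then since $n>6$ some edge would have to leave $D$, and together with two suitable edges of $K^3_6$ this edge completes a $P^3_3$. The same ``path-extension'' argument shows that a vertex already joined to $T$ cannot carry a second edge, which is exactly the mechanism preventing the quadratic, star-type extremal hypergraph of Theorem~\ref{cycle} from surviving under connectivity. After these reductions one should be left with a configuration governed by three central vertices $S$, in the sense that every edge of $H$ meets $S$ in at least two vertices; the count of such edges is at most $1+3(n-3)=3n-8$, matching the construction.

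The principal obstacle is this structural step: proving that in a connected, $P^3_3$-free, triangle-containing $3$-graph the edges really are controlled by a common $3$-set $S$, i.e. that the local picture around a well-chosen triangle propagates to the whole hypergraph. Here I expect to reuse the case analysis behind Theorem~\ref{second}, sorting edges by the size of their intersection with the vertex set of a fixed triangle and repeatedly invoking the principle that two disjoint edges bridged by a third already form a $P^3_3$. The hypothesis $n\ge9$ should enter precisely at this point, guaranteeing enough vertices outside the triangle to realise these bridging paths and thereby excluding the small cases (where instead a $K^3_6$-based hypergraph is optimal).
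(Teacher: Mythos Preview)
Your lower-bound construction is correct and, in fact, coincides with the paper's: setting $S=\{x_1,x_2,x_3\}$, the paper's extremal example $\bigl(\tbinom U3\setminus Z_1\bigr)\cup T_2$ is precisely the family of all triples meeting $S$ in at least two points.

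The upper bound, however, has a genuine gap. The structural claim you aim for --- that in a connected, $P^3_3$-free, triangle-containing $3$-graph every edge meets a common $3$-set $S$ in at least two vertices --- is simply false, even for $n\ge 9$. For instance, take the triangle $C$ on $U=\{x_1,x_2,x_3,y_1,y_2,y_3\}$ together with the edges $\{x_1,y_1,w\}$, $\{x_1,y_1,w'\}$, $\{x_1,y_1,w''\}$. This $3$-graph is connected and $P^3_3$-free, but no $3$-set meets each of $\{x_1,y_2,x_3\}$, $\{x_1,y_3,x_2\}$, $\{x_2,y_1,x_3\}$ and all three $\{x_1,y_1,\cdot\}$-edges in at least two vertices. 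So the ``propagation'' you hope for does not happen at the level of arbitrary $H$; at best it could hold for \emph{extremal} $H$, and that is a substantially harder statement than the bare inequality $|H|\le 3n-8$ you need.

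The paper's route avoids this structural detour entirely. Using the notation of Section~\ref{pre}, Fact~\ref{pusto} gives $H[W]\subset\tbinom{W_2}{3}$, so every vertex of $W_2$ lies only in edges contained in $W_2$; connectivity therefore forces $W_2=\emptyset$ and hence $H[W]=\emptyset$. Now $n\ge 9$ means $s=n-6\ge 3$, and since $H(U,W)\neq\emptyset$ (otherwise $H\subset\tbinom U3$ would again be disconnected), Lemma~\ref{113s} applies directly:
\[
|H|=|H[U]|+|H(U,W)|\le 10+3s=3n-8.
\]
This is two lines once Lemma~\ref{113s} is in hand; the work you propose to redo ``from Theorem~\ref{second}'' has in effect already been packaged there.
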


\begin{remark}[{\bf Disjoint unions of $P_3^3$}]\rm For a positive integer $s$, let $sF$ denote the vertex-disjoint union of $s$ copies of a hypergraph $F$.
Bushaw and Kettle \cite{bk} determined, for large $n$, the Tur\'an number $ex_k(n;sP_\ell^k)$, but only for those instances for which the Tur\'an number $ex_k(n;P_\ell^k)$ had been known (they used induction on $s$).
In particular, they have shown, for large $n$, that \emph{if} $ex_3(n;P_3^3)=\binom n3-\binom{n-1}3$, then $ex_3(n;sP_3^3)=\binom n3-\binom{n-2s+1}3$, providing also the unique extremal 3-graph, which happens to be the same as that for $M^3_{2s}$, the matching of size $2s$ (see \cite{Erdos}). By proving Theorem \ref{main}, we have, at the same time, verified the latter formula unconditionally.
\end{remark}

\section{Preliminaries}\label{pre}

In what follows $H$ is always a $P_3^3$-free 3-graph with $V(H)=V$ and $|V|=n\ge 7$, containing a copy $C$ of the triangle $C_3^3$.
 Let
 $$U=V(C),\quad U=U_1\cup U_2,\quad \mbox{ where }\quad U_1=\{y_1,y_2,y_3\},\quad U_2=\{x_1,x_2,x_3\},$$
 and
 $$ C=\{\{x_i,y_j,x_k\}:\; \{i,j,k\}=\{1,2,3\}\},$$
 so that for $i=\{1,2\}$, $U_i$ is the set of vertices of degree $i$ in $C$ (see Fig. \ref{Fig2}).

\bigskip
\begin{figure}[!ht]
\centering
\includegraphics [width=5cm]{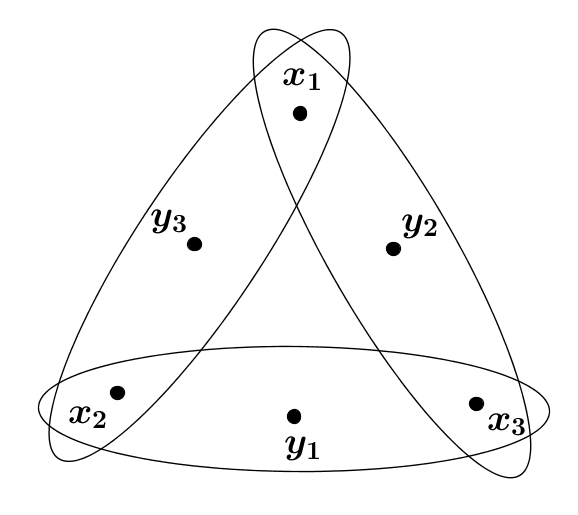}
\caption{The triangle $C^3_3$}
\label{Fig2}
\end{figure}

Further, let
  $$W=V\setminus U=\{w_1,\dots, w_s\},\qquad |W|=s=n-6.$$ We split the set of  edges of $H$  into three subsets (see Fig. \ref{Fig3}),
  $$H[U]=H\cap \tbinom{U}{3},\quad H[W]=H\cap \tbinom{W}{3}\quad\mbox{ and }\quad H(U,W)=H\setminus (H[U] \cup H[W]).$$
\bigskip
\begin{figure}[!ht]
\centering
\includegraphics [width=10cm]{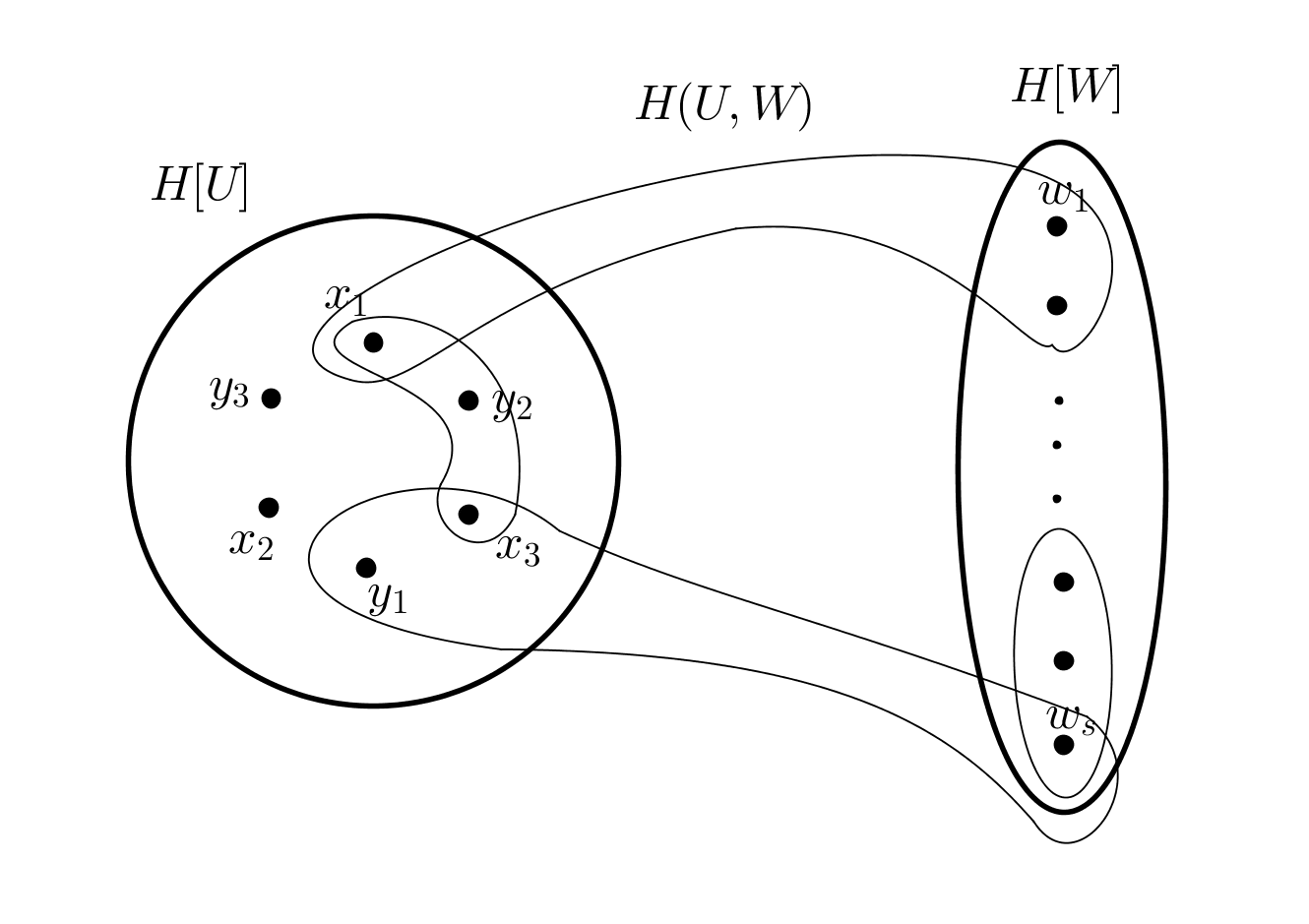}
\caption{The partition of the set of edges of $H$}
\label{Fig3}
\end{figure}
Let us also define two sets of triples (which are not necessarily edges of $H$):
$$T_1=\left\{\{x_i,y_i,w_l\}:\; 1\le i\le 3,\; 1\le l\le s\right\},\qquad T_2=\left\{\{x_i,x_j,w_l\}:\;1\le i<j\le 3,\; 1\le l\le s\right\}$$
(see Fig. \ref{Fig4}) and set
$$T=T_1\cup T_2.$$
\bigskip
\begin{figure}[!ht]
\centering
\includegraphics [width=15cm]{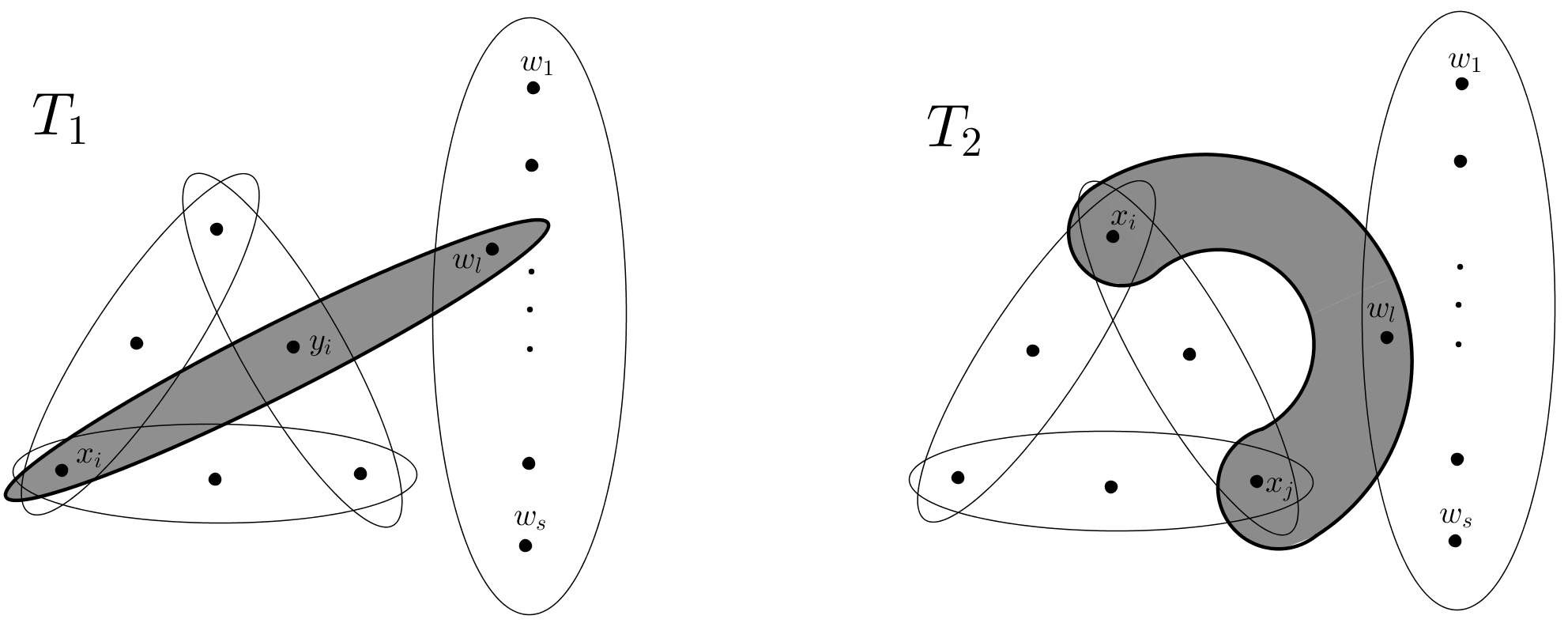}
\caption{The edges in sets $T_1$ and $T_2$ are shaded}
\label{Fig4}
\end{figure}

We begin with several simple observations all of which can be  verified by inspection.
 The first three have been already made in \cite{j}. First of them says that although, in principle, $H(U,W)$ may consist of edges having one vertex in $U$ (and two  in $W$), the assumption that $H$ is $P^3_3$-free makes it impossible. For the same reason, out of the potential edges with two vertices in $U$ (and one in $W$), only those listed in $T$ can actually occur in $H$.
\begin{fact}[\cite{j}, Facts 1-3]\label{euw}
     $$H(U,W)=H\cap T.$$
\end{fact}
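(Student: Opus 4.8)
The plan is to prove the two inclusions separately. The inclusion $H\cap T\subseteq H(U,W)$ is immediate: every triple in $T=T_1\cup T_2$ meets $U$ in exactly two vertices and $W$ in exactly one, so it is a crossing edge and belongs to $H(U,W)$ whenever it belongs to $H$. The content of the Fact is therefore the reverse inclusion $H(U,W)\subseteq H\cap T$, which I would establish by contradiction: I take an arbitrary crossing edge $g\in H(U,W)$, assume $g\notin T$, and exhibit an explicit copy of $P^3_3$ inside $H$ formed by $g$ together with two of the three edges of the triangle $C$. Since $H$ is $P^3_3$-free, no such $g$ can exist, forcing every crossing edge into $T$ and hence $H(U,W)\subseteq H\cap T$. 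The one structural feature I would record at the outset is that $x_i$ and $y_i$ never lie in a common edge of $C$ (the edge through $y_j$ is $\{x_i,y_j,x_k\}$ with $\{i,k\}=\{1,2,3\}\setminus\{j\}$); this is exactly what distinguishes the pairs appearing in $T_1$.

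The case split is governed by $|g\cap U|$, which is either $1$ or $2$ (the value $3$ would place $g$ in $H[U]$, not in $H(U,W)$). Suppose first $|g\cap U|=1$, so $g=\{u,w,w'\}$ with $u\in U$ and $w,w'\in W$. If $u=y_j$, I use the unique edge $m$ of $C$ through $y_j$ as the middle edge (so $g\cap m=\{y_j\}$) and any other edge $f$ of $C$ as the far edge; since $f\subseteq U$ and $g\cap U=\{y_j\}\not\subseteq f$, the edges $g,m,f$ form a $P^3_3$. If $u=x_i$, I take either edge of $C$ through $x_i$ as middle and the unique edge of $C$ avoiding $x_i$ (the one through $y_i$) as the far edge, again producing a $P^3_3$. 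Hence no crossing edge meeting $U$ in a single vertex can occur in $H$.

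Now suppose $|g\cap U|=2$, so $g=\{p,q,w\}$ with $\{p,q\}\subseteq U$, $w\in W$. The pair $\{p,q\}$ is of one of four types, and I would reduce the number of genuinely distinct subcases using the cyclic symmetry $x_i\mapsto x_{i+1}$, $y_i\mapsto y_{i+1}$ (indices mod $3$), which rotates the three edges of $C$ among themselves. The pairs $\{x_i,x_j\}$ and $\{x_i,y_i\}$ are precisely those forming $T_2$ and $T_1$; for these every edge of $C$ meets $g$, so no edge of $C$ is available as an end edge disjoint from $g$, and the inspection cannot rule them out (consistently with their membership in $T$). For the remaining two types I would write down a concrete path. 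If $\{p,q\}=\{y_i,y_j\}$ with $i\neq j$, take the edge $m$ of $C$ through $y_i$ as middle and the third edge $f$ (through neither $y_i$ nor $y_j$) as the far edge: $g\cap m=\{y_i\}$, $m\cap f$ is a single $x$-vertex, and $f\cap g=\emptyset$. If $\{p,q\}=\{x_i,y_j\}$ with $i\neq j$, note that the edge through $y_j$ also contains $x_i$, so I instead choose the edge $m$ of $C$ that contains $x_i$ but not $y_j$ as middle and the remaining edge $f$ as the far edge; one checks $g\cap m=\{x_i\}$, $m\cap f$ is a single vertex, and $f$ contains neither $x_i$ nor $y_j$, so $g\cap f=\emptyset$. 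Each gives a $P^3_3$, so these pairs are excluded, leaving exactly the pairs in $T$.

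I expect no obstacle of principle here: the whole argument is a finite inspection, exactly as the paper advertises. The only real work is bookkeeping, and the single point that must be verified with care in every subcase is that the three exhibited edges form a \emph{linear} path, i.e. that consecutive edges meet in exactly one vertex \emph{and} the two end edges are disjoint. The disjointness of the end edges is where the whole dichotomy lives — it is the condition that fails for the admissible pairs $\{x_i,x_j\}$, $\{x_i,y_i\}$ (every edge of $C$ touches $g$) and succeeds for the forbidden pairs. Exploiting the cyclic symmetry of $C$ collapses the analysis to a handful of representative configurations, after which each is settled by writing out the relevant triple of edges.
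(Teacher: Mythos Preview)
Your proposal is correct and is precisely the explicit case analysis that the paper means by ``verified by inspection''; the paper does not write out a proof of this Fact at all, merely citing \cite{j} and leaving the check to the reader, so your argument fills in exactly the details the paper omits.
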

The next observation excludes coexistence in $H$ of two edges, one from $T$ and the other from $\tbinom W3$, if they share a common vertex. (see Fig. \ref{Fig5}
).

\bigskip
\begin{figure}[!ht]
\centering
\includegraphics [width=15cm]{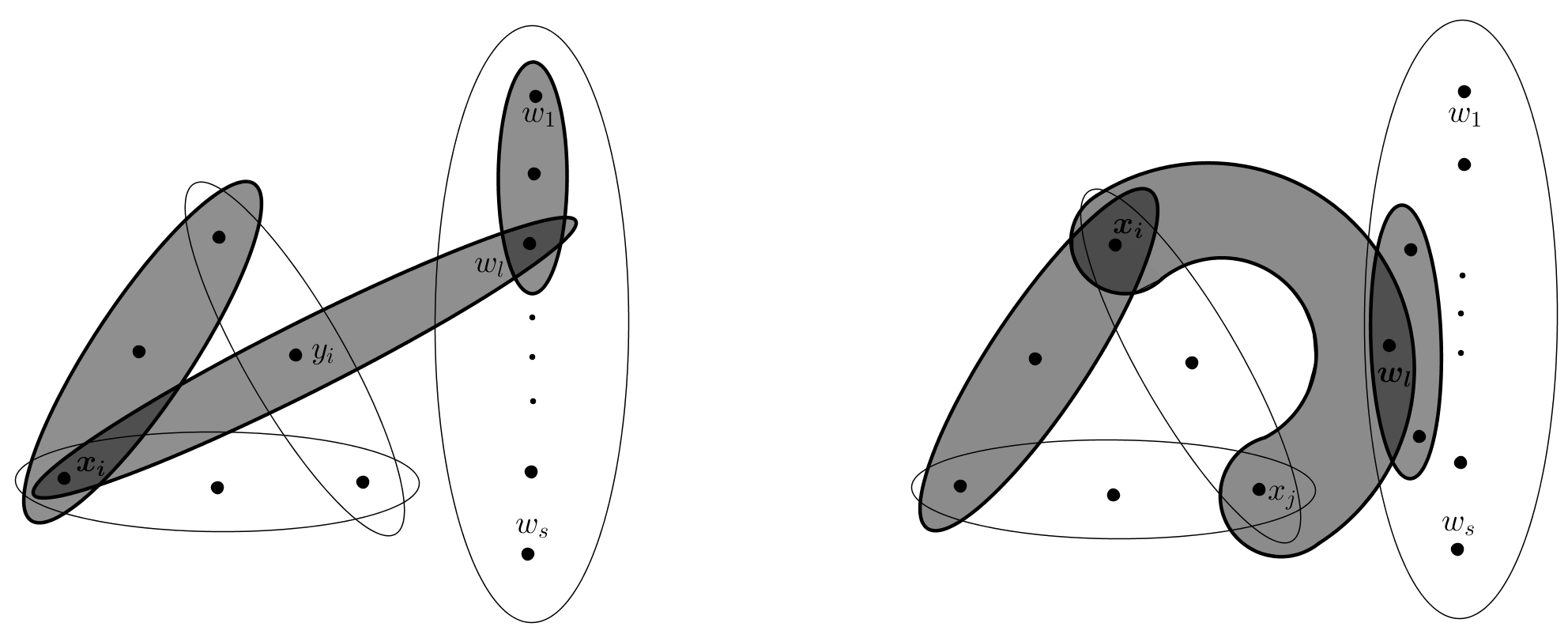}
\caption{Illustration of Fact \ref{pusto}}
\label{Fig5}
\end{figure}

\begin{fact}[\cite{j}, Fact 6]\label{pusto}
    If $e\in T$, $g\in \binom W3$, and $e\cap g\neq \emptyset$, then $C\cup\{e\}\cup\{g\}\supset P^3_3$.
\end{fact}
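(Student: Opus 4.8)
The plan is to exhibit the required linear path directly among the five available edges, the three edges of $C$ together with $e$ and $g$. The starting observation is that $e\in T$ has exactly one vertex in $W$, say $w$, whereas $g\in\binom W3$ lies entirely in $W$; consequently the hypothesis $e\cap g\neq\emptyset$ forces $e\cap g=\{w\}$, so $e$ and $g$ meet in the single vertex $w$. Moreover the remaining two vertices of $e$ lie in $U$, and every edge of $C$ is contained in $U$ while $g\subseteq W$, so any edge $c\in C$ is automatically disjoint from $g$. Therefore, to build a copy of $P^3_3$ on the edges $g,e,c$ it is enough to choose $c\in C$ meeting $e$ in exactly one vertex; that vertex will then be one of the two $U$-vertices of $e$, in particular distinct from $w$, and the three edges $g,e,c$ will be linearly arranged.

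This reduces the statement to finding an edge $c\in C$ with $|c\cap(e\cap U)|=1$, which I would establish uniformly. Pick any vertex $x_p$ of $e$ lying in $U_2$ (one exists, since both $T_1$- and $T_2$-edges contain an $x$-vertex), and let $c$ be the edge of $C$ containing $y_p$, namely $c=\{x_a,y_p,x_b\}$ with $\{a,b\}=\{1,2,3\}\setminus\{p\}$. By construction $x_p\notin c$. If $e\in T_1$ then $e\cap U=\{x_p,y_p\}$ and $c\cap(e\cap U)=\{y_p\}$; if $e\in T_2$ then $e\cap U=\{x_p,x_q\}$ for some $q\neq p$ and, since $x_q\in c$ but $x_p\notin c$, we get $c\cap(e\cap U)=\{x_q\}$. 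Either way $|c\cap e|=1$. Hence $g,e,c$ are three distinct edges with $|g\cap e|=|e\cap c|=1$ and $g\cap c=\emptyset$; a vertex count ($g$ contributes its three vertices, $e$ two more, $c$ two more) gives exactly seven vertices, so these edges form a copy of $P^3_3$.

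There is no real obstacle beyond this bookkeeping. The only point needing attention is to choose $c$ so that it meets $e$ in precisely one vertex; in the $T_2$ case this rules out the edge of $C$ whose $y$-index is the index \emph{absent} from $e$, since that edge contains both $x$-vertices of $e$. Selecting $c$ via an index actually present in $e$, as above, sidesteps this and makes the claim immediate, in line with the verification by inspection advertised for the surrounding facts.
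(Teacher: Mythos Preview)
Your argument is correct: the key reduction to finding a triangle edge $c$ with $|c\cap e|=1$ is sound, and your uniform choice of $c$ as the edge of $C$ containing $y_p$ (for an $x_p\in e$) handles both the $T_1$ and $T_2$ cases cleanly. The paper does not give its own proof of this fact, merely remarking that it ``can be verified by inspection'' and citing~\cite{j}; your write-up is precisely such a verification.
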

 Similarly,  coexistence is impossible between any two disjoint edges, one from $T_1$ and the other from $T$ (see Fig. \ref{Fig6}).

\bigskip
\begin{figure}[!ht]
\centering
\includegraphics [width=15cm]{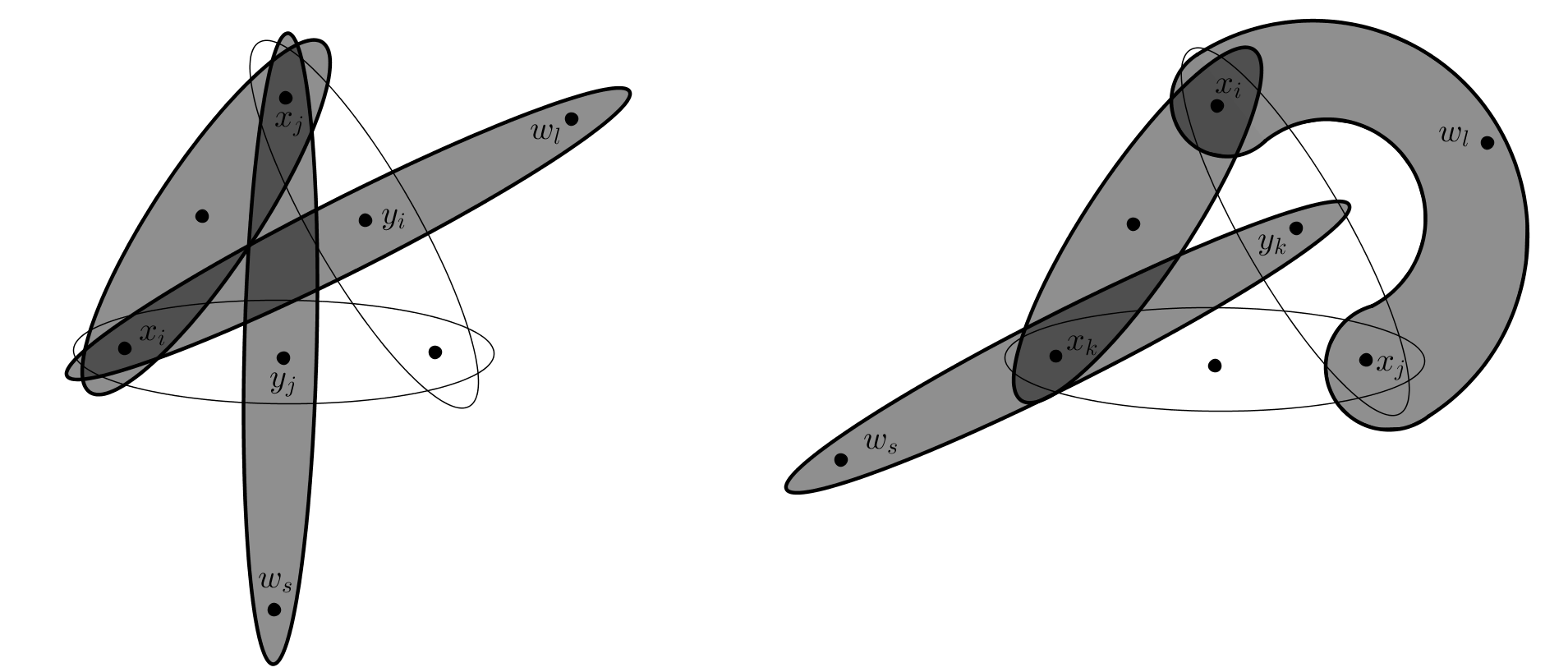}
\caption{Illustration of Fact \ref{disjoint}}
\label{Fig6}
\end{figure}

\begin{fact}\label{disjoint}
     If $e\in T_1$, $f\in T$, and $e\cap f=\emptyset$, then $C\cup\{e\}\cup\{f\}\supset P^3_3$.
\end{fact}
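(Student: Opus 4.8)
The plan is to build the required copy of $P^3_3$ using the two disjoint edges $e$ and $f$ as its two end edges and a single edge of the triangle $C$ as the connecting middle edge. Observe first that since $e\cap f=\emptyset$, it suffices to produce one edge $g\in C$ that meets $e$ in exactly one vertex and meets $f$ in exactly one vertex: the two intersection vertices are then automatically distinct, and $g$ carries no $w$-vertex whereas $e$ and $f$ both do, so $e,g,f$ are three distinct edges forming a linear path of length $3$. Thus the entire fact reduces to locating, for each admissible pair $(e,f)$, such a connecting edge $g$ in $C$.

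Write $e=\{x_i,y_i,w_l\}$. Since $T=T_1\cup T_2$, there are two cases. If $f=\{x_{i'},y_{i'},w_m\}\in T_1$, then $e\cap f=\emptyset$ forces $i'\ne i$ and $w_m\ne w_l$; here I would take $g$ to be the unique edge of $C$ through $y_i$, namely $g=\{x_a,y_i,x_b\}$ with $\{a,b\}=\{1,2,3\}\setminus\{i\}$. Because $i'\ne i$ we have $x_{i'}\in g$, and one checks that $g\cap e=\{y_i\}$ and $g\cap f=\{x_{i'}\}$. If instead $f=\{x_{i'},x_{j'},w_m\}\in T_2$, then disjointness forces $i\notin\{i',j'\}$, and since only three $x$-indices exist this pins down $\{i',j'\}=\{1,2,3\}\setminus\{i\}$; here I would take $g$ to be the edge of $C$ through $x_i$ with middle index $i'$, namely $g=\{x_i,y_{i'},x_{j'}\}$, for which $g\cap e=\{x_i\}$ and $g\cap f=\{x_{j'}\}$. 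In both cases $e,g,f\cong P^3_3$.

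The only steps requiring care are verifying that each of the two intersections has size exactly one and that the path is genuinely loose; these follow from the index restrictions above together with $w_l\ne w_m$ and the fact that the chosen edge of $C$ contains no $w$-vertex. The single genuine subtlety is the forced structure of $f$ in the $T_2$ case: it is precisely because disjointness from $e$ determines $\{i',j'\}$ completely that the selected edge of $C$ is guaranteed to share an $x$-vertex with $f$. As the configuration is invariant under permuting the index $i$, checking one representative value of $i$ in each case suffices, which is exactly the kind of inspection invoked for the surrounding facts.
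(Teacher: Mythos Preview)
Your argument is correct and is precisely the case-by-case inspection the paper alludes to; the paper does not give an explicit proof of this fact, merely stating that it ``can be verified by inspection,'' so your write-up is a faithful expansion of that remark.
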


We will also need the following simple consequence of the K\"onig Theorem.

 \begin{fact}\label{konik}
    In a $t\times s$ bipartite graph, where $t\le s$, the largest possible number of edges not producing a matching of size $m+1$, $m\le t$, is $sm$.
 \end{fact}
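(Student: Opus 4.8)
The plan is to invoke König's theorem to translate the matching condition into a vertex-cover condition, and then bound the edge count by a degree sum. I would let $G$ be a bipartite graph with parts $A$ and $B$ of sizes $t\le s$, and suppose $G$ has no matching of size $m+1$; equivalently, its maximum matching has size at most $m$. By König's theorem the minimum vertex cover of $G$ then also has size at most $m$, so I may fix a set $S$ of at most $m$ vertices meeting every edge of $G$.

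Next I would simply count. Since every edge of $G$ is incident to some vertex of $S$, we have $|E(G)|\le \sum_{v\in S}\deg_G(v)$. Every vertex of $A$ has degree at most $|B|=s$, while every vertex of $B$ has degree at most $|A|=t\le s$, so in fact each vertex has degree at most $s$. Hence $|E(G)|\le s|S|\le sm$, which is the asserted upper bound.

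Finally, to see that $sm$ is attained, I would exhibit the extremal configuration: choose $m$ of the $t$ vertices in $A$ (possible since $m\le t$) and join each of them to all $s$ vertices of $B$, leaving the remaining $t-m$ vertices of $A$ isolated. This graph has exactly $sm$ edges, and since only $m$ vertices of $A$ are incident to any edge, every matching has size at most $m$, so no matching of size $m+1$ arises.

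The argument is short, and the only point requiring care is the direction of the degree bound: a cover vertex placed in the larger part $B$ contributes only $t$ to the count, so it is the hypothesis $t\le s$ that guarantees the uniform bound $\deg_G\le s$ and pins the answer at $sm$ rather than something larger. This asymmetry is exactly why the extremal graph spends all of its cover vertices in the smaller part $A$.
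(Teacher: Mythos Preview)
Your proof is correct and follows exactly the route the paper indicates: the paper states this fact without a detailed proof, merely calling it ``a simple consequence of the K\"onig Theorem,'' and your argument (K\"onig gives a vertex cover of size $\le m$, then a degree-sum bound yields $|E|\le sm$, with tightness via the obvious construction) is precisely that simple consequence spelled out.
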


 Combining Fact \ref{disjoint} for $e,f\in T_1$ with Fact \ref{konik}, we obtain the following corollary.

\begin{cor}\label{HT1}
    For $s\ge3$,
\begin{equation}\label{ttt}
|H\cap T_1|\le s.
\end{equation}
\end{cor}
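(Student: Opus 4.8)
The plan is to encode the members of $H\cap T_1$ as edges of an auxiliary bipartite graph and then invoke Fact~\ref{konik}. Concretely, I would form a bipartite graph $G$ whose one part is the index set $\{1,2,3\}$ and whose other part is $W=\{w_1,\dots,w_s\}$, joining $i$ to $w_l$ precisely when $\{x_i,y_i,w_l\}\in H$. Since every edge of $T_1$ has the form $\{x_i,y_i,w_l\}$ and is determined by the pair $(i,l)$, this is a bijection between $H\cap T_1$ and the edge set of $G$, so that $|H\cap T_1|=|E(G)|$.

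Next I would translate the hypothesis of Fact~\ref{disjoint} into a statement about matchings in $G$. Two edges $\{x_i,y_i,w_l\}$ and $\{x_j,y_j,w_{l'}\}$ of $T_1$ are disjoint exactly when $i\neq j$ (which forces $x_i\neq x_j$ and $y_i\neq y_j$) and $w_l\neq w_{l'}$; in the language of $G$ this says that the corresponding edges $(i,w_l)$ and $(j,w_{l'})$ form a matching of size $2$. Because $H$ is $P^3_3$-free, Fact~\ref{disjoint} (applied with both $e,f\in T_1\subseteq T$) forbids any pair of disjoint edges of $T_1$ from simultaneously belonging to $H$; hence $G$ contains no matching of size $2$.

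Finally I would apply Fact~\ref{konik} with $t=3$ and the given $s\ge3$ (so that the required inequality $t\le s$ holds) and with $m=1\le t$, which bounds the number of edges of a bipartite graph with no matching of size $m+1=2$ by $sm=s$. This yields $|H\cap T_1|=|E(G)|\le s$, as claimed. The argument is essentially mechanical; the only point demanding care is the correct identification of ``disjointness within $T_1$'' with ``a matching of size $2$ in $G$'', together with the routine verification that the hypotheses $t\le s$ and $m\le t$ of Fact~\ref{konik} are indeed satisfied.
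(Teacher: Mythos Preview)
Your proposal is correct and follows essentially the same argument as the paper: both construct the auxiliary $3\times s$ bipartite graph encoding $H\cap T_1$, use Fact~\ref{disjoint} to rule out a matching of size~$2$, and then apply Fact~\ref{konik} with $t=3$, $m=1$. Your version is slightly more explicit in verifying the hypotheses of Fact~\ref{konik}, but the route is identical.
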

\begin{proof}
Let $B$ be the  auxiliary $3\times s$ bipartite graph with vertex classes $\{1,2,3\}$ and $W$, where $\{i,w\}$ is an edge of $B$ if $\{x_i,y_i,w\}\in H$.
Thus, $|B|=|H\cap T_1|$.
By Fact \ref{disjoint}, there are no disjoint edges in $B$. Hence, by Fact \ref{konik} with $t=3$ and $m=1$, $|B|\le s$.
\end{proof}
Another consequence of Fact \ref{disjoint} has been already proved in \cite{j}. We  reproduce that proof for the sake of self-containment.

\begin{prop}[\cite{j}, Fact 4]\label{3s}
    For $s\ge 2$, $$|H\cap T| \le 3s.$$
\end{prop}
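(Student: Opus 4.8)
The plan is to partition the $6s$ triples of $T$ into three \emph{complementary pairs} and to bound the number of $H$-edges inside each pair by $s$. For each $i\in\{1,2,3\}$ let $\{j,k\}=\{1,2,3\}\setminus\{i\}$, and pair the $T_1$-triples of type $i$, namely $\{x_i,y_i,w_l\}$ with $1\le l\le s$, together with the $T_2$-triples $\{x_j,x_k,w_l\}$, $1\le l\le s$. Since each $T_2$-triple $\{x_j,x_k,w_l\}$ arises from exactly one index $i$ (the one missing from $\{j,k\}$), these three pairs tile $T$ without overlap. Accordingly I set $A_i=\{l:\{x_i,y_i,w_l\}\in H\}$ and $B_i=\{l:\{x_j,x_k,w_l\}\in H\}$, so that $|H\cap T_1|=\sum_{i=1}^3|A_i|$, $|H\cap T_2|=\sum_{i=1}^3|B_i|$, and hence $|H\cap T|=\sum_{i=1}^3(|A_i|+|B_i|)$.

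The crucial observation is that, within a single pair, two edges lying over \emph{different} vertices of $W$ are disjoint. Indeed, $\{x_i,y_i\}$ and $\{x_j,x_k\}$ are disjoint subsets of $U$, so for $l\neq l'$ the triples $e=\{x_i,y_i,w_l\}\in T_1$ and $f=\{x_j,x_k,w_{l'}\}\in T$ satisfy $e\cap f=\emptyset$. By Fact~\ref{disjoint}, $C\cup\{e\}\cup\{f\}$ would then contain a copy of $P^3_3$, contradicting the $P^3_3$-freeness of $H$. Therefore we can never have $l\in A_i$ and $l'\in B_i$ with $l\neq l'$.

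It remains to extract the per-pair bound $|A_i|+|B_i|\le s$. If $A_i=\emptyset$ or $B_i=\emptyset$, the surviving set is contained in $\{1,\dots,s\}$ and we are done. Otherwise both are nonempty, and the previous paragraph forces $l=l'$ for every $l\in A_i$ and every $l'\in B_i$; consequently $A_i=B_i$ is a single common vertex, so $|A_i|+|B_i|=2\le s$, where the hypothesis $s\ge2$ is used. Summing over $i=1,2,3$ yields $|H\cap T|\le 3s$, as required.

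The only genuinely delicate points, and the ones I would verify most carefully, are the bookkeeping that the three complementary pairs partition all of $T$, and the collapse in the ``both nonempty'' case to a single shared $W$-vertex; this collapse is also precisely where $s\ge2$ is needed, since for $s=1$ a pair can legitimately contribute $2>s$. Note that Corollary~\ref{HT1} is not actually required here, although it confirms that the $T_1$-part alone is far smaller than the crude allowance $3s$ might suggest.
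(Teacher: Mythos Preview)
Your proof is correct, but it proceeds differently from the paper. The paper builds the auxiliary bipartite graph $B$ on vertex classes $T_1$ and $T_2$, with $\{e,f\}$ an edge of $B$ whenever $e\cap f=\emptyset$; it observes that $B$ is $(s-1)$-regular, invokes Hall's theorem to obtain a perfect matching $M$ of size $3s$, and then uses Fact~\ref{disjoint} to conclude that each matched pair contributes at most one edge to $H$, giving $|H\cap T|\le 3s$. Your argument is more elementary: you explicitly partition the $6s$ triples of $T$ into the three ``complementary'' blocks indexed by $i$, and within each block reduce to the dichotomy $A_i=\emptyset$ or $B_i=\emptyset$ (trivially $\le s$) versus both nonempty (forcing $|A_i|=|B_i|=1$, hence $\le 2\le s$). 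This avoids Hall's theorem entirely and makes transparent exactly where $s\ge 2$ enters; the paper's matching argument, by contrast, uses $s\ge 2$ implicitly to guarantee $B$ has positive degree (for $s=1$ the graph $B$ is empty and no matching exists). Both approaches exploit the same forbidden configuration from Fact~\ref{disjoint}; yours is a hands-on decomposition, the paper's a cleaner but less self-contained application of matching theory.
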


\begin{proof}
     We have
\begin{equation}\label{T=}
|T_1|=|T_2|=3s.
\end{equation} Construct an auxiliary bipartite graph $B=(T_1, T_2; \mathcal{E})$, where $\{e,f\}\in \mathcal{E}$ if $e\cap f=\emptyset$. It follows from Fact \ref{disjoint} that if $\{e, f\}\in \mathcal{E}$, then $|\{e, f\}\cap H|\le 1$. Observe also that the graph $B$ is $(s-1)$-regular. Thus, by Hall's theorem, it has a perfect matching $M$. As at most one edge of each pair $\{e,f\}\in M$ can be  in $H$, we infer that $|H\cap T|\le 3s$.
\end{proof}

\section{The lemmas}

To prove Theorem \ref{main}, we will need the following lemma which, with the notation of Section \ref{pre}, puts a cap on the total number of edges in the subgraphs $H[U]$ and $H(U,W)$, provided the latter is nonempty.

\begin{lemma}\label{eueuw1}
    For $s \ge 1$, if $H(U,W)\neq \emptyset$, then
    $$|H[U]|+|H(U,W)|\le 13+\max\{3s,6\}.$$
\end{lemma}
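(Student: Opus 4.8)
The plan is to bound the three relevant pieces of $H[U]$ and $H(U,W)$ separately and then add them. Recall that $H[U]$ consists of edges inside the six vertices of $U$, so trivially $|H[U]|\le \binom 63=20$; however, since $C\subseteq H[U]$ already contributes $3$ edges and $H$ is $P^3_3$-free, we should expect a much smaller bound. The edges of $H(U,W)$, by Fact \ref{euw}, lie entirely in $T=T_1\cup T_2$, and are therefore controlled by Proposition \ref{3s} ($|H\cap T|\le 3s$) together with the finer estimate of Corollary \ref{HT1} ($|H\cap T_1|\le s$). The appearance of $\max\{3s,6\}$ strongly suggests a case split on whether $s$ is large (where $3s$ dominates and we simply invoke Proposition \ref{3s}) or small, $s\in\{1,2\}$ (where the constant $6$ takes over and Proposition \ref{3s} either does not apply or is too weak). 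The target $13+\max\{3s,6\}$ then decomposes as a universal bound of $13$ on $|H[U]|$ plus the $T$-bound on $|H(U,W)|$.

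First I would pin down $|H[U]|\le 13$. This is a statement purely about $3$-graphs on the six vertices of $U$ that contain the fixed triangle $C$ but no copy of $P^3_3$; it should follow by a finite inspection, exactly the style of reasoning used for Facts \ref{euw}--\ref{disjoint}. Concretely, I would argue that among the $\binom 63=20$ possible triples on $U$, many create a $P^3_3$ once combined with two edges of $C$ (or with each other and one edge of $C$), and count how many triples can simultaneously be present. The hypothesis $H(U,W)\neq\emptyset$ is presumably what forbids $H[U]$ from being the full $K^3_6$ (which is $P^3_3$-free but has $20$ edges): a single edge reaching into $W$ interacts with the dense $U$-part to force a path, so the presence of an edge in $H(U,W)$ is exactly what drops the cap from $20$ down to $13$. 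I would make this precise by showing that any edge $e\in H(U,W)\subseteq T$, together with a rich enough $H[U]$, yields $P^3_3$, and thereby extract the constant $13$.

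Next I would handle $|H(U,W)|$. For $s\ge 2$, Proposition \ref{3s} gives $|H(U,W)|=|H\cap T|\le 3s$ directly, and since $3s\ge\max\{3s,6\}$ is automatic, adding the two bounds yields $|H[U]|+|H(U,W)|\le 13+3s\le 13+\max\{3s,6\}$. The genuinely separate case is $s=1$: here $T_2$ is empty-ish and $3s=3$, so the claimed bound is $13+6=19$, and I would need to show $|H(U,W)|\le 6$ together with $|H[U]|\le 13$ by hand on the seven vertices $U\cup\{w_1\}$. In fact for small $s$ the cleanest route may be a combined count of $|H[U]|+|H(U,W)|$ rather than bounding each term by its worst case, since a dense $H[U]$ and a large $H(U,W)$ cannot coexist — each edge touching $W$ kills several potential $U$-edges — so the two bounds are not independent and a joint analysis gives the constant $13$ more honestly.

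The main obstacle I anticipate is the $|H[U]|\le 13$ inspection and, more subtly, confirming that the constant is exactly $13$ and not something smaller or larger. Getting $13$ right requires understanding precisely which families of triples on the six-vertex set $U$ are simultaneously $P^3_3$-free \emph{and} compatible with at least one edge leaving into $W$; this is where the extremal configuration ($K^3_6\cup K^3_1$ from Theorem \ref{main}, which has $|H[U]|=20$ and $H(U,W)=\emptyset$) must be excluded by the hypothesis $H(U,W)\neq\emptyset$, and I would need to verify that once that exclusion is in force the maximum is indeed $13$. I expect this to reduce to a short but careful case analysis, leaning on Fact \ref{disjoint} to rule out the coexistence of a $T_1$-edge with certain $U$-edges, rather than to any deep argument.
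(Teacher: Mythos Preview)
Your plan differs from the paper's in one essential respect: the paper does \emph{not} establish the uniform bound $|H[U]|\le 13$ whenever $H(U,W)\neq\emptyset$. From a single edge $e\in H(U,W)$ the paper only extracts $|H[U]|\le 15$ in general (when $e\in T_2$, exactly five triples of $\binom U3$ are forbidden by the interaction of $e$ with the edges of $C$; when $e\in T_1$, seven are), and then runs a trade-off: if $|H(U,W)|\le s$ the weak bound $15+s$ already beats $13+\max\{3s,6\}$, while if $|H(U,W)|>s$ one can find either a $T_1$-edge or two $T_2$-edges sharing a vertex of $W$, and this richer configuration forces $|H[U]|\le 13$ (indeed $\le 12$), which is then added to $|H(U,W)|\le\max\{3s,6\}$.

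Your route---prove $|H[U]|\le 13$ outright and add $|H(U,W)|\le\max\{3s,6\}$---is cleaner in shape and does appear to be correct, but the inspection is not as short as you anticipate. The obstacle is precisely the case of a lone $T_2$-edge: listing the triples $g\in\binom U3$ for which $C\cup\{e\}\cup\{g\}\supset P_3^3$ gives only five, hence only $|H[U]|\le 15$. To drop to $13$ you must go to second order and rule out \emph{pairs} $f,g\in H[U]\setminus C$ with $\{e,f,g\}$ a copy of $P_3^3$; this is a small independent-set computation on the remaining twelve triples and does succeed (in fact it yields $|H[U]|\le 11$), but it is the step you have not carried out. The paper sidesteps it by borrowing the missing strength from the assumption $|H(U,W)|>s$.

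A minor slip: your remark that for $s=1$ ``$T_2$ is empty-ish'' is off---$|T_1|=|T_2|=3$ there, and the bound $|H(U,W)|\le 6$ is just $|T|=6$.
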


\begin{proof}

We begin by deducing  upper bounds on $|H[U]|$ implied by the presence of an edge in
$$H(U,W)=(H\cap T_1)\cup(H\cap T_2).$$
Assume first that $H\cap T_1\neq \emptyset$, say $\{x_1,y_1,w\}\in H\cap T_1$ for some $w\in W$. Let (cf. Fig. \ref{Fig2})
      $$X_1=\left\{\{x_1,y_2,y_3\},\{x_2,y_2,y_3\},\{x_3,y_2,y_3\},\{x_2,y_1,y_3\},\{x_3,y_1,y_2\},\{x_2,x_3,y_2\},\{x_2,x_3,y_3\}\right\}.$$
          One can easily check that if $H\cap X_1\neq \emptyset$, then $P^3_3\subseteq H$, a contradiction. Hence,
     $H[U] \subseteq \binom{U}{3}\setminus X_1,$
     and so,
    \begin{equation}\label{t1}|H[U]|\le\left|\tbinom{U}{3}\right|-|X_1|=20-7=13.\end{equation}
Similarly, if $e=\{x_1,x_2,w\}\in H\cap T_2$, then, by considering the set
 $$X_2=\{\{y_1,y_2,y_3\},\{x_2,y_1,y_3\},\{x_3,y_1,y_3\},\{x_1,y_2,y_3\},\{x_3,y_2,y_3\}\},$$
 one can show that
\begin{equation}\label{t2}|H[U]|\le\left|\tbinom{U}{3}\right|-|X_2|=20-5= 15.
\end{equation}
In summary,
\begin{equation}\label{t}   H(U,W)\neq\emptyset\Longrightarrow|H[U]|\le 15.
\end{equation}
Therefore, if $|H(U,W)| \le s$, then, with some margin,
    $$|H[U]|+|H(U,W)|\le 15 + s < 13+\max\{3s,6\}.$$
Consider now the case $|H(U,W)| > s$. Since by  Fact \ref{euw}, Proposition \ref{3s}, and (\ref{T=}), for all $s\ge 1$ we have
\begin{equation}\label{3s6}
|H(U,W)| \le \max\{3s,6\},
\end{equation}
it remains to show that (\ref{t1}) still holds. As explained above, this is the case when $H\cap T_1\neq\emptyset$. Otherwise, $|H\cap T_2|=|H(U,W)|>s$, and,  since $|W|=s$, we infer that there exists a vertex $w \in W$ and two edges $e,f\in H\cap T_2$, both containing $w$.  Then, necessarily, $|e\cap f\cap U| = 1$.
Say, $e\cap f\cap U = \{x_1\}$ (see Fig. \ref{Fig7}). Consequently,  to avoid a copy of $P_3^3$ in $H$,
we must have $H\cap Y = \emptyset$, where
$$Y = X_2\cup \{\{x_2,y_2,y_3\},\{x_2,y_1,y_2\},\{x_3,y_1,y_2\}\},$$
    and so,
$$|H[U]|\le \left|\tbinom{U}{3}\right|- |Y| =20 -8=12,$$
which is even better than (\ref{t1}).
In conclusion, for all $s\ge1$,

\begin{equation}\label{2s}
|H(U,W)| > s\Longrightarrow|H[U]|\le 13.
\end{equation}
\bigskip
\begin{figure}[!ht]
\centering
\includegraphics [width=8cm]{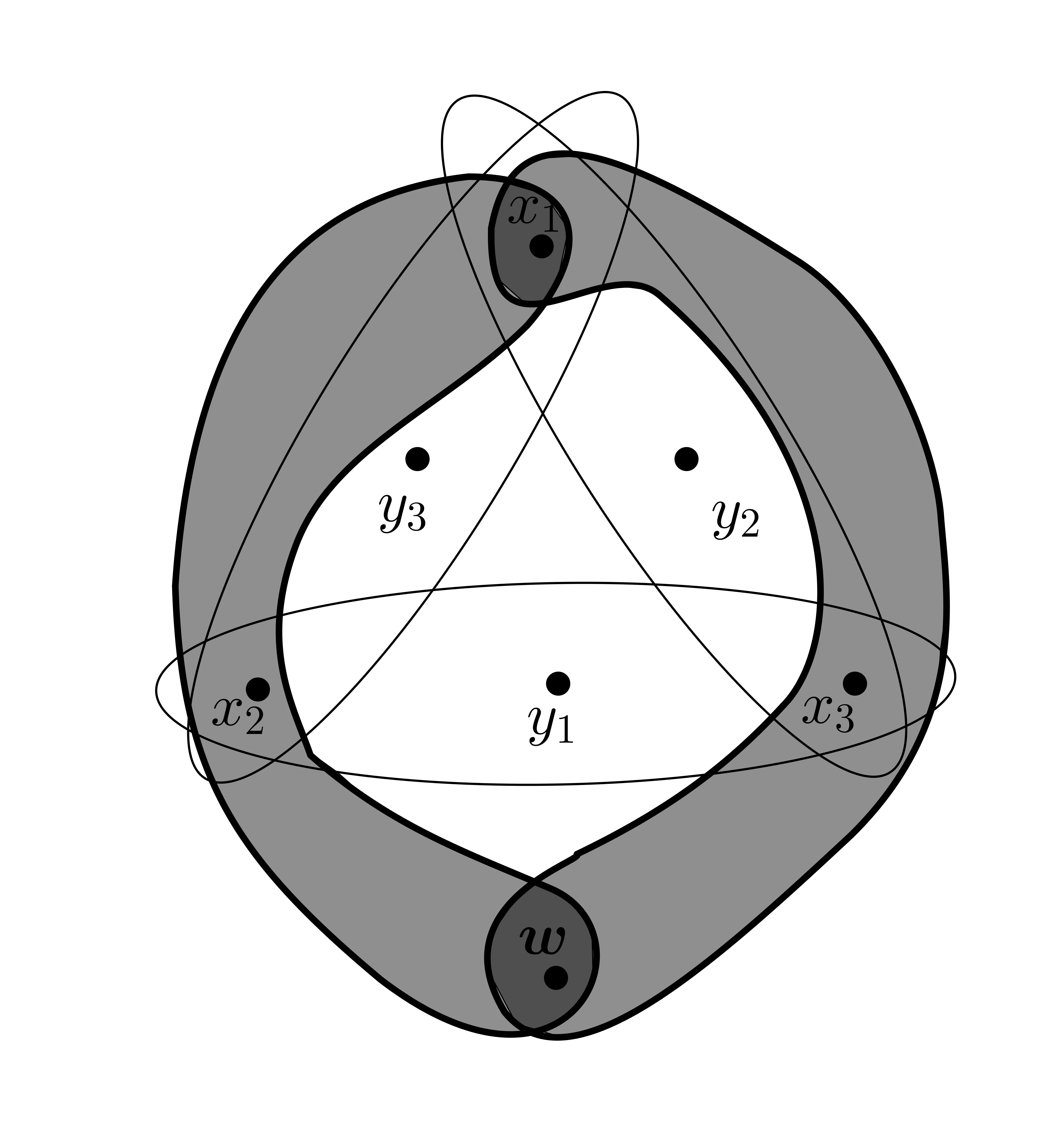}
\caption{Illustration to the proof of Lemma \ref{eueuw1}}
\label{Fig7}
\end{figure}
Putting together bounds (\ref{2s}) and (\ref{3s6}) completes the proof of Lemma \ref{eueuw1}.
\end{proof}

\bigskip

Since for $s\ge2$ we have $\max\{3s,6\}=3s$ and $|H[U]|\le\left|\binom{U}3\right|= 20\le14+3s$, Lemma
\ref{eueuw1} has the following immediate consequence, true no matter whether $H(U,W)=\emptyset$ or not.
\begin{cor}\label{eueuw}
    For $s \ge 2$,
    $$|H[U]|+|H(U,W)|\le 14+ 3s.$$
\end{cor}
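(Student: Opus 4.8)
The plan is to reduce everything to Lemma \ref{eueuw1} by a short case distinction. That lemma already bounds $|H[U]|+|H(U,W)|$ whenever $H(U,W)$ is nonempty; the only gap is the case $H(U,W)=\emptyset$, which Lemma \ref{eueuw1} explicitly excludes in its hypothesis. So the whole argument amounts to handling the nonempty case via the lemma and the empty case via a trivial uniform bound on $|H[U]|$.

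First I would dispose of the nonempty case. Assuming $H(U,W)\neq\emptyset$, Lemma \ref{eueuw1} gives $|H[U]|+|H(U,W)|\le 13+\max\{3s,6\}$. Because $s\ge 2$ forces $3s\ge 6$, we have $\max\{3s,6\}=3s$, so the right-hand side equals $13+3s\le 14+3s$, which settles this case. Next I would treat $H(U,W)=\emptyset$. Here $|H(U,W)|=0$, so $|H[U]|+|H(U,W)|=|H[U]|$, and since $H[U]\subseteq\binom U3$ with $|U|=6$ we get the crude bound $|H[U]|\le\binom 63=20$. Using $s\ge 2$ once more (so $3s\ge 6$ and hence $14+3s\ge 20$), we conclude $|H[U]|\le 20\le 14+3s$. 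Combining the two cases yields the claimed inequality for all $s\ge 2$, regardless of whether $H(U,W)$ is empty.

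There is essentially no genuine obstacle here: the corollary is an immediate consequence of Lemma \ref{eueuw1} together with the trivial bound $|H[U]|\le 20$. The one point requiring care is that Lemma \ref{eueuw1} assumes $H(U,W)\neq\emptyset$, so the empty case cannot be read off directly and must be argued separately; this is precisely where the one unit of slack between the constant $13$ in the lemma and the constant $14$ in the corollary is spent, since for $s=2$ the bound $14+3s=20$ exactly absorbs the worst case $|H[U]|=\binom 63=20$.
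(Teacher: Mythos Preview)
Your proof is correct and matches the paper's own argument essentially line for line: the paper also observes that for $s\ge 2$ one has $\max\{3s,6\}=3s$, applies Lemma~\ref{eueuw1} in the nonempty case, and handles the case $H(U,W)=\emptyset$ via the trivial bound $|H[U]|\le\binom{6}{3}=20\le 14+3s$.
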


%xxxxxxxxxxxxxxxxxxxxxxxxxxxxxxxxxxx

In the proof of Theorem \ref{second} we will  need a further improvement, under additional constraints, of the bound in Corollary \ref{eueuw}.

\begin{lemma}\label{113s}
    For $s\ge 3$, if $H(U,W)\neq\emptyset$, then
     $$|H[U]| + |H(U,W)| \le 10 + 3s.$$
\end{lemma}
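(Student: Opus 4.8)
The plan is to track the two quantities $a=|H\cap T_1|$ and $b=|H\cap T_2|$, so that $|H(U,W)|=a+b$ by Fact \ref{euw}, and to balance the cap on $|H[U]|$ against the cap on $a+b$. From Proposition \ref{3s} we already have $a+b\le 3s$ and from Corollary \ref{HT1} we have $a\le s$; since for $s\ge3$ the target $10+3s$ improves the bound $13+3s$ of Lemma \ref{eueuw1} by exactly $3$, the whole game is to show that whenever $a+b$ is pushed close to its maximum $3s$, the set $H[U]$ is correspondingly small. The key technical ingredient will therefore be a family of sharper versions of \eqref{t1}--\eqref{t}, obtained by intersecting several of the forbidden sets $X_1,X_2,Y$ (and their images under the symmetries of $C$) that arise when $H(U,W)$ contains more than one edge.

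First I would record, all by the same inspection as in Lemma \ref{eueuw1}, the following reinforcements; here I call a $T_1$-edge $\{x_i,y_i,w\}$ \emph{of type $i$}, and I write $b_{ij}$ for the number of edges of $H\cap T_2$ on the pair $\{x_i,x_j\}$. If $H$ contains two edges of $T_1$ of different types, then the union of the two copies of $X_1$ forbids $11$ triples, so $|H[U]|\le 9$. If $H$ contains an edge of type $i$ together with a $T_2$-edge on the pair $\{i,j\}$, then $X_1\cup X_2$ forbids $9$ triples and $|H[U]|\le 11$; if in addition there is a $T_2$-edge on the pair $\{i,k\}$ (where $\{i,j,k\}=\{1,2,3\}$), then the copy of $X_2$ for that pair enters as well and $|H[U]|\le 10$. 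Finally, if all three $T_2$-pairs occur at a common vertex $w\in W$, then two rotated copies of $Y$ forbid $10$ triples, so again $|H[U]|\le 10$.

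With these caps in hand the argument splits on whether $H\cap T_1=\emptyset$. If it is, then $b\ge1$ and I would distinguish three ranges: for $b\le s$ the estimate $|H[U]|\le 15$ of \eqref{t} gives $|H[U]|+b\le 15+s$; for $s<b\le 2s$ some $w$ lies in two $T_2$-edges, so $|H[U]|\le 12$ and $|H[U]|+b\le 12+2s$; and for $b>2s$ some $w$ carries all three pairs, so $|H[U]|\le 10$ and $|H[U]|+b\le 10+3s$. Each of these is at most $10+3s$ once $s\ge3$. If instead $H\cap T_1\neq\emptyset$, I would first dispose of the case of two $T_1$-edges of different types through $|H[U]|\le 9$ together with $a+b\le 3s$. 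In the remaining case all edges of $H\cap T_1$ share a single type, say type $1$; then Fact \ref{disjoint} forces every $T_2$-edge on the pair $\{2,3\}$ to sit on one of the $a$ vertices carrying a type-$1$ edge, so $b_{23}\le a$, and in fact $b_{23}=0$ as soon as $a\ge2$. A short subdivision according to which of the pairs $\{1,2\},\{1,3\}$ actually occur, combined with the caps $|H[U]|\le 10,11,13$ above and with $a\le s$, then yields $|H[U]|+a+b\le 10+3s$ in every subcase.

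The main obstacle is precisely the boundary between these subcases: the crude estimates $|H[U]|\le 12$ and $|H[U]|\le 13$ inherited from Lemma \ref{eueuw1} fall short of the target by one or two exactly when $a+b$ is driven to (or near) the maximum $3s$ permitted by Proposition \ref{3s}. Overcoming this requires checking that an abundance of cross edges genuinely forces one of the \emph{larger} forbidden configurations---two differently-typed $T_1$-edges, or a vertex $w$ incident to two or three $T_2$-pairs---to appear, and then verifying by inspection that the enlarged forbidden sets really have size $10$ or $11$ rather than $7$ or $8$. I expect the bookkeeping of $b_{12},b_{13},b_{23}$ in the single-type branch to be the most delicate point, since there the final saved unit comes only from the simultaneous presence of both pairs $\{1,2\}$ and $\{1,3\}$.
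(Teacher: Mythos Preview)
Your plan is correct and would go through: the caps $|H[U]|\le 9,10,11,12,13$ you list can all be verified by inspecting unions of the sets $X_1,X_2$ and their images under the $S_3$-symmetry of $C$ (for instance $|X_1\cup X_2|=9$, $|X_1\cup X_2\cup X_2''|=10$, $|X_1\cup X_1'|=11$, and the two rotated copies of $Y$ do give the paper's $Z_1$ of size $10$), and the subsequent arithmetic in every subcase you describe lands at or below $10+3s$ once $s\ge3$.

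The paper proves the same lemma with a different case organisation. Instead of tracking $a,b_{12},b_{13},b_{23}$ throughout, it first splits crudely on the size of $|H(U,W)|$: the ranges $(0,s]$ and $(s,2s]$ are dispatched immediately via \eqref{t} and \eqref{2s}, and the whole fine analysis is reserved for $|H(U,W)|\ge 2s+1$, where the paper shows uniformly that $|H[U]|\le 10$ and then invokes Proposition~\ref{3s}. Within that top range the paper's subcases (all $T_2$; two $T_1$-edges of the same type at different $w$'s; all $T_1$-edges at a single $w'$) overlap with but are not identical to yours. In particular, your claim~1 ($|H[U]|\le 9$ when two $T_1$-types occur) does not appear in the paper; the paper absorbs that situation into its Subcase~2, where the counting $|H\cap T_2|\le t+(3-t)s$ forces $t\le2$ and, for $t=2$, even pins down $s=3$. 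What your organisation buys is a more uniform treatment that avoids that somewhat delicate endgame; what the paper's organisation buys is that the first two ranges fall out for free from Lemma~\ref{eueuw1}, leaving only one hard case.
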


\begin{proof}
If $0 <|H(U,W)| \le s$, then, by (\ref{t}),
     $$|H[U]| + |H(U,W)| \le 15 + s < 10 + 3s.$$
Also, if $s < |H(U,W)| \le 2s$, then by (\ref{2s}), $$|H[U]| + |H(U,W)| \le 13 + 2s \le 10 +3s.$$

For the rest of the proof we are assuming that
\begin{equation}\label{7}|H(U,W)|=|H\cap T_1|+|H\cap T_2| \ge 2s+1\ge7.
\end{equation}
 We are going to show that
\begin{equation}\label{3t2}
|H[U]|\le 10.
\end{equation}
Then the lemma will follow by Proposition \ref{3s}.

Consider first the case when $H\cap T_1= \emptyset$. Then $|H\cap T_2| \ge 2s+1$ and, thus, there must exist a vertex $w\in W$ such that all three edges $\{x_i,x_j,w\}$, $1\le i < j \le 3$, belong to $H$ (see Fig. \ref{Fig8}).
\bigskip
\begin{figure}[!ht]
\centering
\includegraphics [width=8cm]{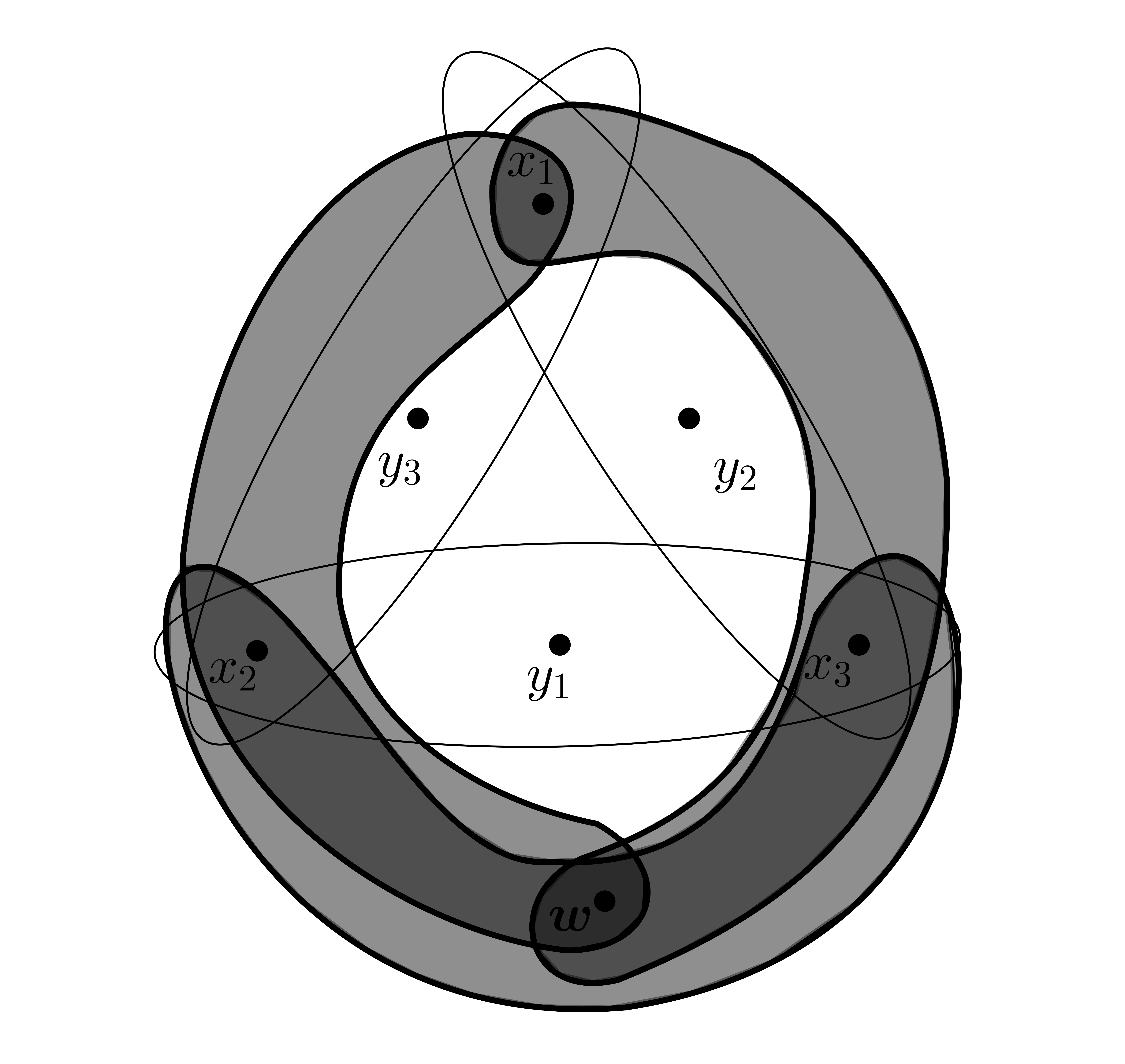}
\caption{Illustration to the proof of Lemma \ref{113s}: case $H\cap T_1= \emptyset$}
\label{Fig8}
\end{figure}
But then, since $H$ is $P_3^3$-free, we have $H\cap Z_1 = \emptyset$, where
 $$Z_1 = \{\{y_1,y_2,y_3\}, \{y_i,y_j,x_k\}:1\le i < j \le 3, 1 \le k \le 3\},\,\,\,\, |Z_1|=10.$$
Thus, (\ref{3t2})  holds.

 Assume now that $H\cap T_1\neq \emptyset$. W.l.o.g., let $h'=\{x_1,y_1,w'\}\in H$, where $w'\in W$, and distinguish two subcases.

\bigskip

 \noindent{\bf Subcase 1:} For some $w''\in W$, $w''\neq w'$, we have $h''=\{x_1,y_1,w''\}\in H$.
   By  Fact \ref{disjoint}, every edge of $H\cap T_2$ must intersect both, $h'$ and $h''$. Thus, every edge of $H\cap T_2$ contains vertex $x_1$. Since, by (\ref{ttt}),  $|H\cap T_1| \le s$, we infer that $|H \cap T_2| > s$. Consequently, there  exists a vertex $w\in W$ with $\{x_1,x_2,w\}$ and $\{x_1,x_3,w\}$ belonging to $H$ (see Fig. \ref{Fig9} for the case when $w=w'$).
\bigskip
\begin{figure}[!ht]
\centering
\includegraphics [width=8cm]{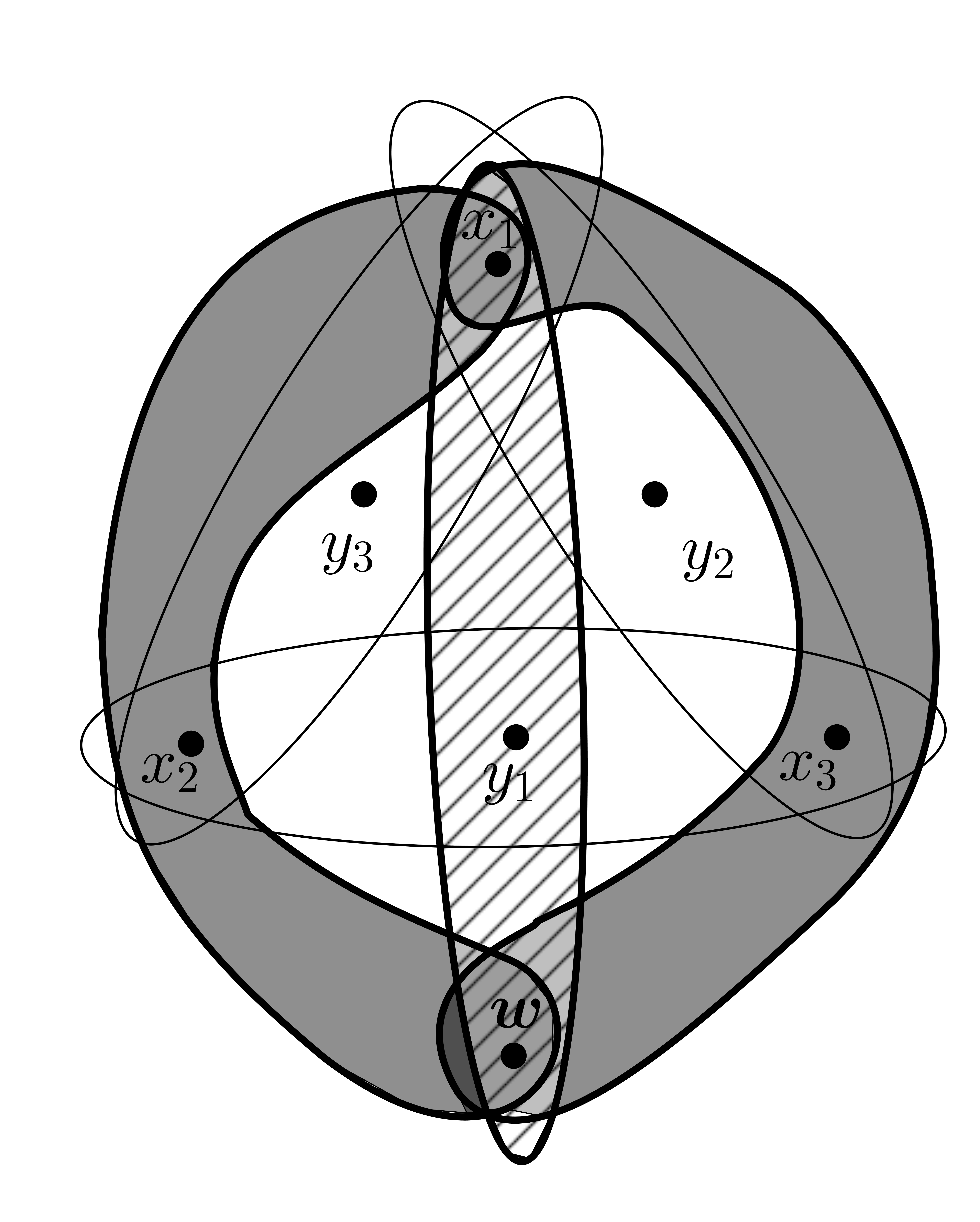}
\caption{Illustration to the proof of Lemma \ref{113s}: case $H\cap T_1\neq \emptyset$}
\label{Fig9}
\end{figure}
But then
$H\cap Z_2 = \emptyset$, where
$$Z_2= Y \cup X_1 = Y \cup \{\{x_2,x_3,y_2\},\{x_2,x_3,y_3\}\},\,\,\,\, |Z_2|=10,$$
and (\ref{3t2}) holds.

\bigskip

 \noindent{\bf Subcase 2:} $H\cap T_1\subseteq\{\{x_i,y_i,w'\},1\le i \le 3\}$. Set $|H\cap T_1|=t$, $1\le t \le 3$.
  By Fact \ref{disjoint}, for every $i=1,2,3$, if $\{x_i,y_i,w'\}\in H$ then $\{x_j,x_k,w\}\not\in H$ for all $w\neq w'$, where $\{j,k\}=\{1,2,3\}\setminus\{i\}$. Hence,
 
$$
|H\cap T_2|\le  t+(3-t)s,
$$
 and we have $t\le 2$ by (\ref{7}). Moreover, for $t=2$, 
 $2s-1\le |H\cap T_2|\le 2+s$
 which forces $s=3$, and, consequently, 
$|H\cap T_2|=5$.
  This, in turn, implies the existence in $H$ of all three edges $\{x_i,x_j,w'\}$, $1\le i < j \le 3$, as in the case $H\cap T_1=\emptyset$ discussed above, and, again (\ref{3t2}) holds. Finally, if $t=1$, that  is, $H\cap T_1=\{h'\}$, then, letting $e'=\{x_2,x_3,w'\}$,
   $$|(H\cap (T_2\setminus\{e'\})|\ge|H\cap T_2|-1=|H\cap T|-2\ge2s-1>s.$$
    Consequently, there exists a vertex $w\in W$ belonging to two edges of $T_2\setminus\{e'\}$. This means that regardless of whether $w=w'$ or $w\neq w'$, the edges $\{x_1,x_2,w\}$ and $\{x_1,x_3,w\}$ both belong to $H$. As this is the same configuration as  in Subcase 1 (cf. Fig. \ref{Fig9}), the bound (\ref{3t2}) holds again.

%\bigskip
%\begin{figure}[!ht]
%\centering
%\includegraphics [width=7cm]{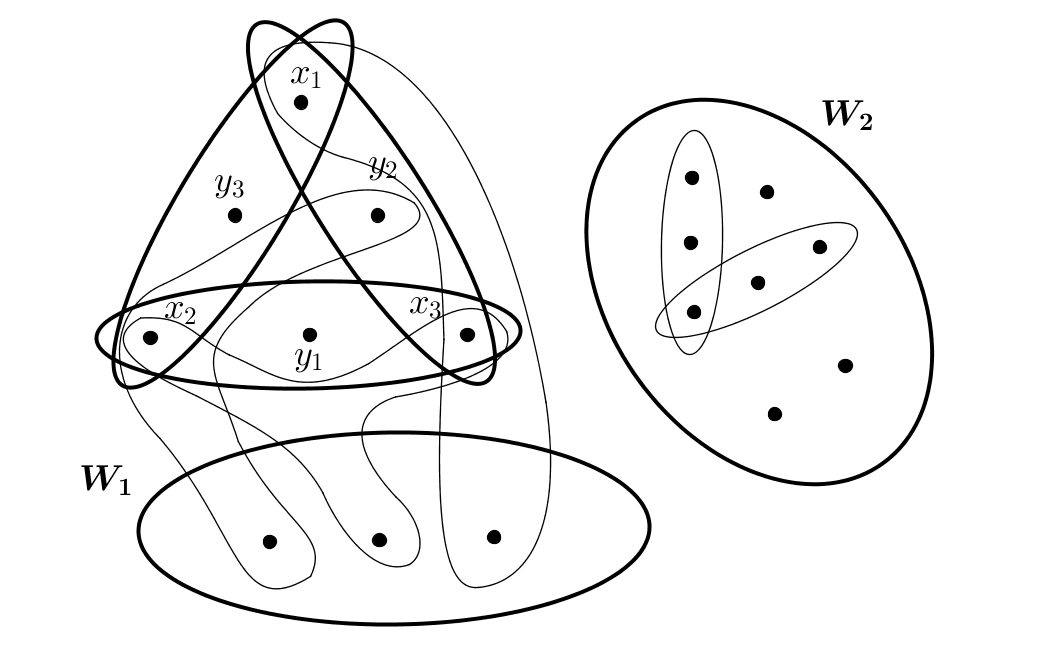}
%\caption{Three edges of $H\cap T_2$ and the edge $H\cap T_1$}
%\label{Fig10}
%\end{figure}

\end{proof}

\section{Proofs of Theorems \ref{main} and \ref{second}}\label{pt1}

\subsection{Proof of Theorem \ref{main}}
This proof is by induction on $n$. Since $P^3_3$ contains 7 vertices,  Theorem \ref{main} is
trivially true for $n\le 6$. Although we begin the inductive step at $n=8$ only, our proof has
the same logical structure for all $n\ge7$.
 First note that both candidates for the extremal 3-graph, $H_7:=K_6^3\cup K_1$ for $n=7$ and $H_n:=S_n^3$
 for $n\ge8$, are $P^3_3$-free.
 We will be assuming that $H$ is a $P_3^3$-free 3-graph, with $|V|=n$, $|H|\ge |H_n|$
 and $H\neq  H_n$. By Theorem \ref{cycle},
$H$ is going to contain a copy $C$ of the triangle $C_3^3$. From that point on we will make our way
toward an application of Lemma \ref{eueuw1}, leading to the inequality $|H|<|H_n|$,
contradicting our assumption. Ultimately, we will show that no $P^3_3$-free 3-graph on $n$ vertices and at least
$|H_n|$ edges exists, except for $H_n$ itself, which is precisely the statement of
Theorem~\ref{main}. Now come the details. Throughout, we keep the notation introduced  in Section
\ref{pre}.

\bigskip

\noindent {\bf $\mathbf{n=7}$ (initial step).} Let $H$ be a $P_3^3$-free 3-graph with $V(H)=V$,
$|V|=n=7$ (thus, $s=1$), $|H| \ge 20$, and let $H\neq K_6^3\cup K_1$. Note that $20 >
\binom{7-1}{2}=15$ and so, by Theorem \ref{cycle}, $H$ contains a copy $C$ of the triangle $C_3^3$.
As  $H\neq K_6^3\cup K_1$, we infer that $H(U,W)\neq\emptyset$. Hence, by Lemma \ref{eueuw1},
\begin{equation*}
|H[U]|+|H(U,W)|\le 13 + \max\{3s,6\} = 19 < 20,
\end{equation*}
a contradiction.

\bigskip

\noindent {\bf $\mathbf{n\ge8}$ (inductive step).} Let $H$ be a $P_3^3$-free 3-graph with $V(H)=V$,
$|V|=n\ge 8$, $|H| \ge \binom{n-1}{2}$ and let $H\neq S^3_n$. By Theorem \ref{cycle}, $H$ contains
a copy $C$ of the triangle $C_3^3$.  By Corollary \ref{eueuw}, with $s=n-6$, we get
$$|H|=|H[U]|+|H(U,W)|+|H[W]|\le 14 + 3s + ex_3(s;P_3^3).$$
Consequently, to complete the proof  it remains to show that
$$14 + 3s + ex_3(s;P^3_3) < \binom{n-1}{2} = \binom{s+5}{2},$$
that is, to show that
$$ex_3(s;P^3_3) <\binom{s+5}{2}-3s-14=\binom{s+2}{2} -5.$$
To this end, we rely on our induction's assumption, in particular, on the formula for $ex_3(s;P^3_3)$.
For $s=\{2,3,4,5,6\}$ (equivalently, $n=\{8,9,10,11,12\}$),  one can check by direct substitution that
$$ex_3(s;P_3^3)=\binom{s}{3} <\binom{s + 2}{2} - 5.$$
For $s=7$  ($n=13$), $$ex_3(s;P_3^3)=20 < \binom{7+2}{2} - 5 = 31.$$
Finally, for $s\ge 8$ ($n\ge 14$),
 $$ex_3(s;P_3^3)=\binom{s-1}{2} <\binom{s-1}{2} + 3s - 5 = \binom{s+2}{2} - 5.$$
\qed

\subsection{Proof of Theorem \ref{second}}\label{pt3}
Although not inductive, this proof is based on similar ideas to those used in the proof of Theorem \ref{main}, as well as on Theorem~\ref{main} itself.
There is nothing to prove for $n=6$.  From now on we will be assuming that $n\ge 7$, or equivalently, that $s\ge 1$
(again, we keep the notation introduced  in Section \ref{pre}).

Let $H$ be a $P_3^3$-free 3-graph with $V(H)=V$, $|V|=n\ge 7$, containing a copy $C$ of the
triangle $C^3_3$. Observe that if $H(U,W)=\emptyset$, then the only $P^3_3$-free, $n$-vertex 3-graph
with at least $20 + ex_3(n-6;P_3^3)$ edges consists of a  copy of $K_6^3$ and a $P^3_3$-free
extremal 3-graph on $n-6$ vertices. Consequently, in order to prove Theorem \ref{second}, it is
sufficient to show that if $H(U,W)\neq\emptyset$ then
$$|H| < 20 + ex_3(n-6;P_3^3).$$

Assume that $H(U,W)\neq\emptyset$. We split the set of vertices $W$ into two subsets:
$$W_1 = \{w\in W: \text{ there exists an edge } e\in H(U,W) \text{ such that } w\in e\},$$
and
$$W_2 = W\setminus W_1.$$
Set $|W_i|=s_i$, $i=1,2$, where $s_1+s_2=s=n-6$.
By Facts \ref{euw} and  \ref{pusto}, $H[W] \subset \tbinom{W_2}{3}$ (see Fig.\ref{Fig11}). It
turns out that all we need to show is that
$$|H[U]| + |H(U,W)| < 20 + ex_3(s_1;P^3_3).$$
\bigskip
\begin{figure}[!ht]
\centering
\includegraphics [width=10cm]{}
\caption{The division of the set $W$ into two subsets $W_1$ and $W_2$}
\label{Fig11}
\end{figure}
Indeed, by the subadditivity of $ex_3(t;F)$ as a function of $t$, we will then have
$$|H|=|H[U]| + |H(U,W)| + |H[W]| <20 + ex_3(s_1;P^3_3)  + ex_3(s_2;P_3^3) \le 20 +ex_3(s; P^3_3).$$
For $1\le s_1\le 2$, we apply Lemma \ref{eueuw1} to the induced subhypergraph $H[U\cup W_1]$
to get
$$
|H[U]|+|H(U,W)| \le 13 + \max\{3s_1,6\}=19 < 20=20 + ex_3(s_1;P^3_3).
$$
Finally, assume that $s_1\ge 3$. By Lemma \ref{113s} applied to
$H[U\cup W_1]$ and by Theorem \ref{main} with $n:=s_1$, we conclude that
 $$|H[U]| + |H(U,W)| \le 10 + 3s_1< 20 + ex_3(s_1;P_3^3),$$
 where the verification of the last inequality is left to the reader. \qed

\begin{proof}[Proof of  Corollary \ref{connected}]

With the notation of the proof of Theorem \ref{second}, observe that the connectivity assumption implies that $W_2=\emptyset$. Thus, by Lemma \ref{113s}
$$|H|=|H[U]| + |H(U,W)| \le 10 + 3(n-6)=3n-8.$$
Moreover, the $3$-graph with vertex set $V$ and the edge set $\left(\binom U3\setminus Z_1\right)\cup T_2$ contains $C^3_3$, is $P^3_3$-free and has  $3n-8$ edges. \end{proof}

\section{Conditional Tur\'an numbers}
Inspired by Theorem \ref{second}, in this final section we discuss some restricted versions of
 Tur\'an numbers. We begin with a general definition of the conditional Tur\'an numbers.

Given an integer $n$, a family of $k$-graphs $\mathcal F$, and a family of $\mathcal F$-free $k$-graphs $\mathcal G$,   let
$ex_k(n;\mathcal F|\mathcal G)$ be the largest number of edges in an $n$-vertex $\mathcal F$-free $k$-graph $H$ such
that $H\supseteq G$ for some $G\in\mathcal G$. If $\mathcal F=\{F\}$ or $\mathcal G=\{G\}$, we will simply write
$ex_k(n;F| \mathcal G)$, $ex_k(n;\mathcal F|G)$, or $ex_k(n;F|G)$, respectively.

Of course, we have $ex_k(n;\mathcal F|\mathcal G)\le ex_k(n;\mathcal F)$.
For instance, comparing Theorems \ref{main} and \ref{second}, we see that for $n\ge14$
$$ ex_3(n;P^3_3)-ex_3(n;P^3_3|C_3^3)=6n-47.$$
In view of the equality $ex_3(n;P^3_3)=ex_3(n;C^3_3)$ (for $n\ge8$), it would be also interesting to calculate the reverse conditional Tur\'an number, namely $ex_3(n;C^3_3|P_3^3)$. For $n\ge7$, consider a 3-graph $H(n;C|P)$ consisting of an edge $\{x,y,z\}$ and all edges of the form $\{x,y,w\}$, $w\neq z$, and $\{z,w',w''\}$, where $\{w',w''\}\cap\{x,y\}=\emptyset$ (see Fig. \ref{Fig12}).

\bigskip
\begin{figure}[!ht]
\centering
\includegraphics [width=9cm]{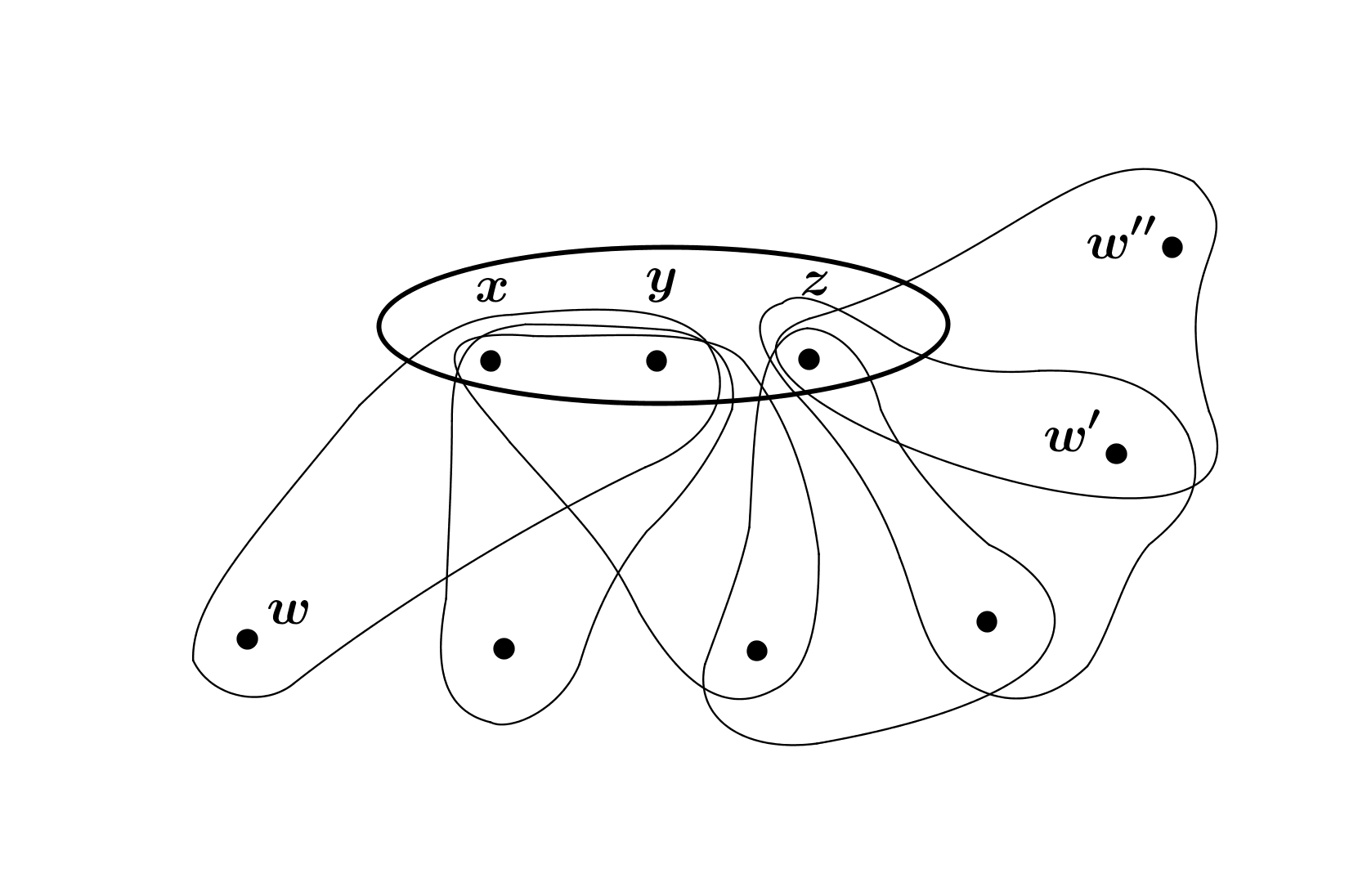}
\caption{ Part of the 3-graph $H(n;C|P)$}
\label{Fig12}
\end{figure}

Note that $P^3_3\subseteq H(n;C|P)\not\supseteq C^3_3$ and thus
$$
ex_3(n;C^3_3|P_3^3)\ge|H(n;C|P)|=1+(n-3)+\binom{n-3}2=\binom{n-2}2+1.
$$
So, again a conditional Tur\'an number, though not yet determined, is going to be not much smaller than its unconditional counterpart.
This is not a coincidence.
In fact, we have the following observation.
\begin{prop}If  $\mathcal F$ consists of connected $k$-graphs only and neither $\mathcal F$ nor $\mathcal G$ depends on $n$,  then
$$ex_k(n;\mathcal F|\mathcal G)\sim ex_k(n;\mathcal F).$$
\end{prop}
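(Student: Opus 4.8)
The plan is to prove two matching bounds. The upper bound $ex_k(n;\mathcal F|\mathcal G)\le ex_k(n;\mathcal F)$ is immediate and is already recorded in the text, so the entire task reduces to establishing a lower bound of the form $ex_k(n;\mathcal F|\mathcal G)\ge(1-o(1))\,ex_k(n;\mathcal F)$.

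For the lower bound I would fix, once and for all, a member $G_0\in\mathcal G$ and set $c=|V(G_0)|$, a constant since $\mathcal G$ does not depend on $n$. Let $H^\ast$ be an extremal $\mathcal F$-free $k$-graph on $n-c$ vertices, so that $|H^\ast|=ex_k(n-c;\mathcal F)$, and form the vertex-disjoint union $H=H^\ast\cup G_0$ on $n$ vertices. This $H$ trivially contains $G_0\in\mathcal G$, and I claim it is $\mathcal F$-free: since every member of $\mathcal F$ is connected, any copy of such a member inside a disjoint union must lie entirely in one of the two components, but $H^\ast$ is $\mathcal F$-free by choice and $G_0$ is $\mathcal F$-free because $\mathcal G$ consists of $\mathcal F$-free $k$-graphs. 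Hence $ex_k(n;\mathcal F|\mathcal G)\ge|H|=ex_k(n-c;\mathcal F)+|G_0|\ge ex_k(n-c;\mathcal F)$.

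It then remains to show $ex_k(n-c;\mathcal F)\sim ex_k(n;\mathcal F)$, i.e. that deleting a constant number of vertices costs only a $(1-o(1))$ factor. Monotonicity gives one side for free: adjoining an isolated vertex to an extremal $\mathcal F$-free $k$-graph shows $ex_k(n-1;\mathcal F)\le ex_k(n;\mathcal F)$. For the reverse direction I would use averaging: in an extremal $k$-graph on $n$ vertices the average degree is $k\,ex_k(n;\mathcal F)/n$, so some vertex has degree at most this value; deleting it leaves an $\mathcal F$-free $k$-graph on $n-1$ vertices with at least $(1-k/n)\,ex_k(n;\mathcal F)$ edges, whence $ex_k(n-1;\mathcal F)\ge(1-k/n)\,ex_k(n;\mathcal F)$. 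Iterating this inequality $c$ times multiplies together $c$ factors, each of the form $1-O(1/n)$, giving $ex_k(n-c;\mathcal F)\ge\prod_{j=0}^{c-1}\bigl(1-\tfrac{k}{n-j}\bigr)\,ex_k(n;\mathcal F)=(1-o(1))\,ex_k(n;\mathcal F)$ for fixed $c$ and $k$. Combining with monotonicity yields $ex_k(n-c;\mathcal F)\sim ex_k(n;\mathcal F)$, and chaining this with the displayed construction and the trivial upper bound completes the argument.

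I expect the one genuinely substantive point to be the last step, $ex_k(n-c;\mathcal F)\sim ex_k(n;\mathcal F)$: the averaging argument is exactly what makes a constant vertex-deletion asymptotically free, and it is where the hypothesis that $\mathcal F$ is fixed (independent of $n$, so that $c$ and $k$ are constants) is really used. The connectedness of $\mathcal F$ enters only in the disjoint-union step, guaranteeing that gluing a copy of $G_0$ onto $H^\ast$ creates no new forbidden configuration. One should also note the mild tacit assumption that $ex_k(n;\mathcal F)\to\infty$, without which the ratio interpretation of $\sim$ degenerates; this holds in every nontrivial case and in particular whenever some member of $\mathcal F$ has more than $k$ vertices.
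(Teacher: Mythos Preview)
Your proof is correct and follows essentially the same approach as the paper: a disjoint-union construction $H^\ast\cup G_0$ for the lower bound, together with a minimum-degree vertex-deletion argument to show $ex_k(n-c;\mathcal F)\sim ex_k(n;\mathcal F)$. The only cosmetic difference is that the paper removes all $c$ smallest-degree vertices in one step rather than iterating, and it is terser about why the disjoint union is $\mathcal F$-free; your version spells out the role of connectedness more explicitly.
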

\proof By considering a disjoint union of any $G\in\mathcal G$ and
any extremal $\mathcal F$-free graph on $n-|V(G)|$ vertices, we have
$$ ex_k(n-|V(G)|;\mathcal F)+|E(G)|\le ex_k(n;\mathcal F|\mathcal G)\le ex_k(n;\mathcal F).$$
Moreover, by removing $g=|V(G)|$ vertices of smallest degrees from an extremal $\mathcal F$-free $k$-graph on $n$ vertices, we infer that
 $$ex_k(n-g;\mathcal F)\ge ex_k(n;\mathcal F)\left(1-\frac{kg}{n-g}\right).\qed $$

\subsection{Nontrivial intersecting families}

For disconnected $F$, conditioning on the presence of specified subhypergraphs may cause a Tur\'an number drop significantly.
A prime example of this phenomenon is the celebrated Erd\H os-Ko-Rado
Theorem on the maximum size of intersecting families. It asserts that for $n\ge2k$, with $M^k_2$ standing for a pair of disjoint $k$-sets, $ex_k(n;M^k_2)=\binom{n-1}{k-1}$, and, for $n\ge 2k+1$, $Ex_k(n;M^k_2)=\{S^k_n\}$.
It was thus quite natural to ask what is the largest number of edges in an $n$-vertex $M^k_2$-free $k$-graph which is not a star (the so called \emph{nontrivial} intersecting family). Hilton and Milner \cite{hm} proved that the answer to this question is $\binom{n-1}{k-1}- \binom{n-k-1}{k-1} +1$ (see \cite{ff2}
for a short proof).

For $k=3$, it can be checked that an
intersecting triple system is not a star if, and only if, it contains either the triangle $C^3_3$
or the  3-graph $F_5=(\{a,b,c,d,e\},\;\{\{a,b,c\},\{c,d,e\},\{e,a,b\}\})$, or  the clique $K^3_4$.
 From this perspective, the above strengthening of the E-K-R Theorem, due to Hilton and Milner,
  can be reformulated, for $k=3$, as
\begin{equation}\label{HM}
ex_3(n;M^3_2|\{C^3_3,F_5,K^3_4\})=3n-8.
\end{equation}
 Hence, for $\mathcal F=\{M_2^3\}$, a conditional
Tur\'an number can be much smaller than the unconditional one (linear vs. quadratic function of $n$.)

\subsection{Second order Tur\'an numbers}

The Tur\'an numbers for $P^k_3$ and $C^k_3$ reveal a whole lot of similarity  to the E-K-R Theorem. Indeed, restricting just to the case $k=3$, we have, for $n\ge8$,
$$ex_3(n;P^3_3)=ex_3(n;C^3_3)=ex_3(n;M_2^3)=\binom{n-1}2$$
and
$$Ex_3(n;P^3_3)=Ex_3(n;C^3_3)=Ex_3(n;M_2^3)=\{S^3_n\}.$$
 Therefore, like in  the E-K-R case, one might ask for the largest size of a \emph{nontrivial} $P^3_3$-free (or $C^3_3$-free)  3-graph, that is, one which is not a star.

Let us generalize this question. Suppose that for some $n$ and $F$, we have $Ex_k(n;F)=\{H(n;F)\}$, that is, there is a unique (up to isomorphism) extremal $F$-free $n$-vertex $k$-graph $H(n;F)$.
Let $\overline{ex}_k(n;F)$ be the largest number of edges in an $F$-free $n$-vertex $k$-graph $H$ such that $H\not\subseteq  H(n;F)$.
(Besides, the nontrivial intersecting families, a version of this parameter has been studied already for  cliques in graphs,  see \cite{Amin}, where the classical Tur\'an number  $ex_2(n;K_t)$ was restricted to non-$(t-1)$-partite graphs).

Returning to the Tur\'an numbers for $P^3_3$ and $C^3_3$,
observe that for each $F\in\{P^3_3,C^3_3\}$
$$\overline{ex}_3(n;F)=\max\left[ex_3(n;F|M_2^3),\overline{ex}_3(n;\{F,M_2^3\})\right]$$
and $$\overline{ex}_3(n;\{F,M_2^3\})\le \overline{ex}_3(n;M_2^3)\overset{(\ref{HM})}{=}3n-8.$$
Now, consider the following constructions for $n\ge6$. Let $H(n;P|M)$ be the union of a clique $K_4^3$ and a full star $S_{n-3}^3$ whose center is located at one of the vertices of the clique, but which otherwise is vertex-disjoint from the clique (see Fig. \ref{Fig13}). Then $M^3_2\subseteq H(n;P|M)\not\supseteq P^3_3$ and so
$$ex_3(n;P^3_3|M_2^3)\ge|H(n;P|M)|=\binom{n-4}2+4\ge3n-8$$
for $n\ge11$, which, in turn, implies that
$$\overline{ex}_3(n;P^3_3)=ex_3(n;P^3_3|M_2^3).$$

\bigskip
\begin{figure}[!ht]
\centering
\includegraphics [width=9cm]{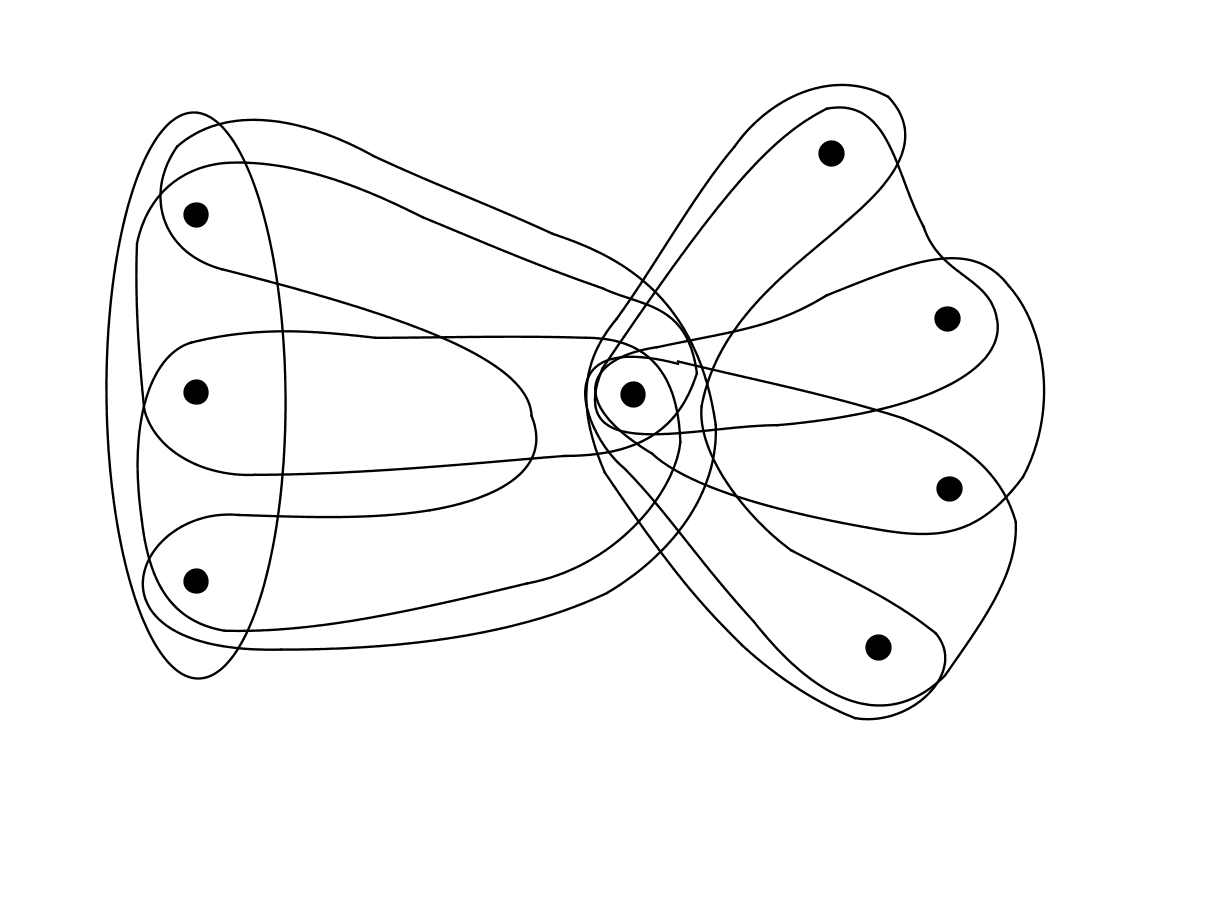}
\caption{ Part of the 3-graph $H(n;P|M)$}
\label{Fig13}
\end{figure}

Moreover, since $M_2^3\subset P_3^3$,
$$ex_3(n;C^3_3|M_2^3)\ge ex_3(n;C^3_3|P^3_3)\ge|H(n;C|P)|\ge\binom{n-2}2+1\ge3n-8$$
for $n\ge8$, and thus, we also have
$$\overline{ex}_3(n;C^3_3)=ex_3(n;C^3_3|M_2^3).$$

\section{Open problems and remarks}

It would be interesting to verify the following conjecture in which we express our belief that these conditional Tur\'an numbers are, indeed, determined by the above described constructions.

\begin{conj} With a possible exception of some small values of $n$,
$$ex_3(n;P^3_3|M_2^3)=\binom{n-4}2+4,$$
$$ex_3(n;C^3_3|M_2^3)=\binom{n-2}2+1.$$
\end{conj}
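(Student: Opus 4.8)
The two lower bounds are already in hand: the constructions $H(n;P|M)$ and $H(n;C|P)$ are $P^3_3$-free (resp.\ $C^3_3$-free), contain $M^3_2$, and have exactly $\binom{n-4}2+4$ and $\binom{n-2}2+1$ edges. So the whole content of the conjecture lies in the matching upper bounds, i.e.\ in showing that a $P^3_3$-free (resp.\ $C^3_3$-free) $3$-graph on $n$ vertices that is not contained in the star $S^3_n$ can have no more edges than these constructions. Because of the identities $\overline{ex}_3(n;F)=ex_3(n;F|M^3_2)$ recorded in the paper, this is precisely the second-order Tur\'an problem for $F\in\{P^3_3,C^3_3\}$, and the plan is to extract the extra structure from the proofs of Theorems \ref{main} and \ref{cycle}.

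For $F=P^3_3$ the first step is to dispose of the case in which $H$ contains a triangle. If $C^3_3\subseteq H$, then Theorem \ref{second} gives $|H|\le 20+ex_3(n-6;P^3_3)$, which for $n\ge14$ equals $20+\binom{n-7}2$; the difference $\binom{n-4}2+4-\bigl(20+\binom{n-7}2\bigr)=3n-34$ is positive for such $n$, so the triangle-containing case is never extremal and only small $n$ remain in doubt, exactly as the conjecture allows. Hence one may assume $H$ is simultaneously $P^3_3$-free and $C^3_3$-free while still containing two disjoint edges $e_1,e_2$. I would then run a block decomposition analogous to Section \ref{pre}, with $U=e_1\cup e_2$ in the role of the triangle: split $H$ into $H[U]$, the cross edges $H(U,W)$, and $H[W]$; bound $|H[W]|\le ex_3(n-6;P^3_3)$ by Theorem \ref{main}; and establish the $M^3_2$-analogues of Facts \ref{euw}--\ref{disjoint} to kill every edge meeting both $e_1$ and $e_2$ (any such edge $g$ completes a copy $e_1,g,e_2$ of $P^3_3$). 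The aim is to show that the surviving cross edges together with $H[U]$ force the clique-plus-star shape of $H(n;P|M)$.

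For $F=C^3_3$ the reduction to a triangle is unavailable, so I would instead couple Theorem \ref{cycle} with a stability argument: a $C^3_3$-free $3$-graph with close to $\binom{n-1}2$ edges must be nearly a star, and the hypothesis that it contains $M^3_2$ (hence is not a sub-star) should force precisely the deficiency realised by $H(n;C|P)$, namely a single ``book'' through a pair $\{x,y\}$ attached to a star centred at a third vertex $z$. The hard part, in both cases, is the fine case analysis of the core situation --- a $\{P^3_3,C^3_3\}$-free (resp.\ $C^3_3$-free) host containing $M^3_2$ --- where, unlike the single-star extremal graphs of Theorems \ref{main} and \ref{cycle}, the extremal configurations are genuinely two-piece. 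One must rule out the competing dense patterns built from $F_5$ and $K^3_4$ flagged in \eqref{HM}, and control the interaction between the two pieces without the convenient symmetry the triangle $C$ provided in the unconditional proof. I expect this structural bookkeeping, rather than any single inequality, to be the main obstacle.
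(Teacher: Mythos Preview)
The statement you are trying to prove is \emph{Conjecture~1} in the paper, and the paper offers no proof of it whatsoever. The authors explicitly label it a conjecture, give only the lower-bound constructions $H(n;P|M)$ and $H(n;C|P)$, and in Remark~2 write that they ``intend to address the first conjecture in a forthcoming paper.'' There is therefore nothing in the paper to compare your attempt against: the upper bounds you are outlining are genuinely open here.

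As for the outline itself, it is a plausible plan of attack but not a proof, and it contains at least one slip. Your claim that any edge $g$ meeting both $e_1$ and $e_2$ completes a copy $e_1,g,e_2$ of $P^3_3$ is false: if $|g\cap e_1|=2$ and $|g\cap e_2|=1$ (which is perfectly possible for a triple $g\subset e_1\cup e_2$), then no ordering of $e_1,g,e_2$ is a linear path, and such edges are not immediately excluded. These ``two-plus-one'' cross edges are exactly what builds the clique $K^3_4$ in the conjectured extremal graph $H(n;P|M)$, so they cannot all be killed; the real work is to control how many of them can coexist with the rest of the structure. Similarly, your $C^3_3$ paragraph is a statement of intent (``stability argument,'' ``should force precisely the deficiency'') rather than an argument. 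The reduction of the triangle-containing case for $P^3_3$ via Theorem~\ref{second} is correct and your arithmetic $\binom{n-4}{2}+4-\bigl(20+\binom{n-7}{2}\bigr)=3n-34$ checks out, but that disposes only of the easy case.
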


\begin{remark}\rm
We intend to address the first conjecture  in a forthcoming paper \cite{ejaR}.
If true, it would imply that (again, except for some small $n$)
\begin{equation}\label{equal}
ex_3(n;C^3_3|M_2^3)=ex_3(n;C^3_3|P_3^3).
\end{equation}
 Indeed, if $ex_3(n;P^3_3|M_2^3)\le\binom{n-4}2+4,$ then
$$ex_3(n;C^3_3|P^3_3)\ge|H(n;C|P)|=\binom{n-2}2+1\ge\binom{n-4}2+4\ge ex_3(n;P^3_3|M_2^3).$$
 Thus,
\begin{equation*}\begin{split}ex_3(n;C^3_3|M_2^3)&=\max\left[ex_3(n;C^3_3|\{M_2^3,P^3_3\}),ex_3(n;\{C^3_3,P^3_3\}|M_2^3)\right]\\&\le
\max\left[ex_3(n;C^3_3|P^3_3),ex_3(n;P^3_3|M_2^3)\right]=ex_3(n;C^3_3|P^3_3),\end{split}
\end{equation*}
which, together with the obvious inverse inequality, implies (\ref{equal}).
\end{remark}

\begin{remark}\rm
Conditional Tur\'an numbers defined in this paper may be a useful tool in determining the corresponding Ramsey numbers. For instance, in \cite{j} it has been shown that $R(P^3_3;3)=9$ by observing that if the triples of the clique $K^3_9$ are 3-colored than at least one color appears on more than 28 edges, or all three colors appear each on precisely 28 edges. In either case, Theorem \ref{main} implies that there must be a monochromatic copy of $P^3_3$ (in the latter case, because one cannot partition $K^3_9$ into 3 stars). For more than 3 colors this simple approach does not work any more, but instead one needs to look at the numbers $\overline{ex}_3(n;P^3_3)$ and beyond (see \cite{ejaR}).
\end{remark}

\end{document}